\theoremstyle{thmstyleone}%
\newtheorem{theorem}{Theorem}
\newtheorem{theoremletter}{Theorem}
\newtheorem{corollary}[theorem]{Corollary}%
\newtheorem{lemmaletter}[theoremletter]{Lemma}
\theoremstyle{thmstyletwo}%
\newtheorem{remark}{Remark}%
\theoremstyle{thmstylethree}%
\begin{document}
\newcommand{\R}{\mathds{R}}
\newcommand{\So}{\mathcal{S}}
\newcommand{\M}{M^{+}(\R^n)}
\newcommand{\wpp}{\mathbf{W}_{0}^{1,p}(\Omega)}
\newcommand{\wa}{\mathbf{W}_{\alpha,p}}
\newcommand{\wao}{\mathbf{W}_{\alpha,p}}
\newcommand{\wat}{\mathbf{W}_{\alpha,p}}
\newcommand{\wak}{\mathbf{W}_{k}}
\newcommand{\wah}{\mathbf{W}_{\frac{2k}{k+1},k+1}}
\newcommand{\ia}{\mathbf{I}_{2\alpha}}
\newcommand{\Om}{\Omega}
\newcommand{\e}{\epsilon}
\newcommand{\g}{\gamma}
\newcommand{\al}{\alpha}
\newcommand{\un}{u_{n}}
\newcommand{\vf}{\varphi}
\newcommand{\wc}{\rightharpoonup}
\newcommand{\phik}{\Phi^{k}(\Om)}
\newcommand{\la}{\lambda}
\newcommand{\Linf}{\Delta_{\infty}}
\newcommand{\dx}{\mathrm{dx}}
\newcommand{\Div}{\mathrm{div}}
\def\norma#1#2{\|#1\|_{\lower 4pt \hbox{$\scriptstyle #2$}}}
\def\L#1{L^{#1}(\Omega)}

\title{Quasi-linear elliptic equations with superlinear convection}
\author{\fnm{Genival} \sur{da Silva}\footnote{email: gdasilva@tamusa.edu, website: \url{www.gdasilvajr.com}}}
\affil{\orgdiv{Department of Mathematics}, \orgname{Texas A\&M University - San Antonio}}


\abstract{We discuss the existence and regularity of solutions to a quasi-linear elliptic equation involving a Leray-Lions operator and a convection term with superlinear growth. In particular, equations involving the p-Laplacian are covered. This paper generalizes some of the results in \cite{boc24}.}

\keywords{quasi-linear, elliptic equation, p-laplacian, regularity, existence, Leray-Lions}


\pacs[MSC Classification]{35J92, 35J15,35B65,35A01}

\maketitle
\section{Introduction and main results}\label{sec1}
In this work we consider a nonlinear problem whose basic model is:
\begin{equation} \label{plap}
\begin{cases}
 -\Delta_{p}u= -\Div(h(u) E(x))+f(x) \qquad & \mbox{in } \Omega,\\
u (x) = 0 & \mbox{on }  \partial \Omega,
\end{cases}
\end{equation}
for $p\ge 2$ a given positive number and $h$ with superlinear growth.

More precisely, in this paper we analyze the existence and regularity of solutions to the more general problem:
\begin{equation} \label{main}
\begin{cases}
 -\Div(a(x,u,Du))= -\Div(h(u)E(x))+f(x) \qquad & \mbox{in } \Omega,\\
u (x) = 0 & \mbox{on }  \partial \Omega,
\end{cases}
\end{equation}
where $a(x,s,\xi)$ is a Leray-Lions operator defined in $\wpp$ with $p\ge 2$. Namely, the function $a:\Om\times\Om\times \R^{n}\to \R^{n}$ is Caratheodory, which satisfies, for a.e. $x\in\Om$, any $s\in\R$, any $\xi,\xi^{'}$, with $\xi\neq \xi^{'}$:
\begin{equation}\label{llcon}
\begin{split}
	& (a(x,s,\xi)-a(x,s,\xi^{'}))(\xi-\xi^{'})>0, \\
	& a(x,s,\xi)\xi \geq \al |\xi|^{p} \;  \mbox{for some } \, \al>0,\\
	& |a(x,s,\xi)| \leq \beta (b(x)+|s|^{p-1}+|\xi|^{p-1}).\\
\end{split}
\end{equation}
The function $f(x)$ and the vector field $E(x)$ belong to suitable Lebesgue spaces, and the function $h(x)$ is locally Lipschitz with super-linear growth, more precisely:
\begin{equation}
\label{hcon}
h\in \mathbf{W}^{1,\infty}_{loc}(\mathbb{R}),  \ \ \  h(0)=0, \ \ \ \mbox{and} \ \ \ \lim_{|s| \to  \infty} \frac{|h(s)|}{|s|}=+\infty.
\end{equation}

\vspace{0.1in}
\begin{remark}
Notice that if $a(x,s,\xi)=|\xi|^{p-2}\xi$ we recover \eqref{plap}. It may seem easier to use variational methods in this case but notice that $E$ does not necessarily have a potential function, i.e. $E(x)=Dg(x)$.
\end{remark}
\vspace{0.1in}
\begin{remark}
The fact that $h(s)$ has superlinear growth makes the problem interesting since in this case the operator $A(u)=-\Div(a(x,u,Du) - h(u)E(x))$ is not coercive, and the Leray-Lions existence theorem can not be applied directly.
\end{remark}
\vspace{0.1in}
Define 
\[
R(k)=\int_0^{k}\frac{ds}{(|h(s)|+1)^{p'}}, \quad S(t)=\int_0^t\frac{ds}{(|h(s)|+1)^{\frac{p'}{p}}}.
\]
Due to the superlinear growth of $h(s)$, the function $R(k)$ is bounded and $R(u_{n})\in\wpp$. Similar to the linear case, existence and regularity of solutions is connected to the growth of $S(t)$ at infinity. 

Depending on $h(s)$, the function $S(t)$ may have a horizontal asymptote, e.g. $h(s)=|s|s^{\theta}$, or go to infinity, e.g. $h(s)=s\log(1+|s|)$.

This work is motivated by and generalizes \cite{boc24}, where the authors analyzed existence and regularity in the linear case $$a(x,s,\xi) = M(x)\xi$$ of the above Dirichlet problem for many values of $h(s)$ and summability assumptions on the source $f$. What inspired me to write this text was the question: 

\vspace{0.1in}
\underline{\textit{Is the quasi-linear case similar to the linear case?}}
\vspace{0.1in}

Despite most of the arguments to continue being valid, some modifications and/or new ideas are needed in the quasi-linear case, specially if we don't assume strong monotonicity, uniqueness can fail. 

The case $p=1$ is not treated here, and it would be interesting to see the equivalent results in this setting and its geometric significance, specially its connection with equations of mean curvature type, see section \ref{last}.

We will also be interested in the same problem with added low order term. More precisely for a fixed $\mu>0$:
\begin{equation} \label{main2}
\begin{cases}
 -\Div(a(x,u,Du)) + \mu u= -\Div(h(u)E(x))+f(x) \qquad & \mbox{in } \Omega,\\
u (x) = 0 & \mbox{on }  \partial \Omega,
\end{cases}
\end{equation}
Similar to the linear case, the presence of $\mu u$ increases the regularity of the solution even if $a(x,s,\xi)$ is not linear, e.g. $p>2$ in \eqref{plap}. In the case we describe, the added term makes possible to omit the smallness condition usually needed in these cases.

By a weak solution of \eqref{main} or \eqref{main2} with $\mu\ge 0$, we mean a function $u\in\wpp$ such that for $f\in L^{(p^{*})'}(\Om)$, $h(u)E(x)\in [L^{p'}(\Om)]^{n}$ we have
\begin{equation}
\label{wk}
\int_{\Omega} a(x,u,Du) D \varphi + \mu \int_{\Omega} u\vf=\int_{\Omega}h(u)E(x) D \varphi  + \int_{\Omega} f \varphi \quad \forall \varphi \in  \wpp.
\end{equation}

\subsection{Existing literature and background}
When $a(x,s,\xi)=M(x)\xi$ and $h(u)=u$, the problem is well understood. The seminal work of Stampacchia \cite{sta65} introduced techniques and ideas to study this type of problem and its generalizations. 

When $p=2$, the problem is connected with static solutions of the transport equation:
\[
\partial_{t} u -\Div(M(x)Du - h(u)E(x))=f
\]
with a wide variety of applications, ranging from biology \cite{hil09} to pedestrian dynamics \cite{iuorio}.

In \cite{boc09}, Boccardo studies the linear problem when $h(u)=u$ with $f$ having low summability $s<\frac{N}{2}$, in particular the existence of bounded solutions is not guaranteed. We prove a generalization of this result in theorem \ref{uns}.

When $f$ has even lower summability, i.e. $s<(2^{*})'$, the right hand side in \eqref{main} is not in $H^{-1}(\Om)$, which causes a major obstacle to the existence. Despite this, the authors in \cite{boc89,boc92} are able to prove existence and regularity results in this setting as well. For a more modern reference, see \cite{boc13}.

The book \cite{boc13} provides a great introduction to quasi-linear elliptic equations of divergence type involving operators of Leray-Lions type. Detailing major techniques and approaches to existence and regularity of classical problems in nonlinear elliptic theory. For a more comprehensive and classical reference of elliptic equations of second order we recommend \cite{trudinger}.

\subsection{Summary of results}
The first theorem concerns the existence of solutions assuming \eqref{llcon} and \eqref{hcon}. A major difference in this scenario compared to the linear case is that uniqueness is not guaranteed unless we impose a stronger monotonicity. We prove the following:
\vspace{0.1in}
\begin{theorem}
Assuming \eqref{llcon},\eqref{hcon}, $E\in [\L r]^{N}$ with $r>N$, $f\in\L m$ with $m>\frac{N}{p}$, and suppose the following condition holds:
\begin{equation}
\lim_{|t|\to \infty}|S(t)|=\infty.
\end{equation}
Then the Dirichlet problem \eqref{main} has a bounded solution $u\in\wpp\cap L^{\infty}(\Om)$.
\end{theorem}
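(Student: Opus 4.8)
The plan is to solve \eqref{main} by an approximation scheme, to prove an $L^\infty$ bound that is uniform along the approximation via a Stampacchia-type iteration performed on the rescaled unknown $S(\un)$, and then to pass to the limit by the classical Leray--Lions compactness argument.

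\textbf{Step 1 (approximation).} I would replace $h$ by $h_n(s)=h(s)/(1+|h(s)|/n)$, which is bounded, locally Lipschitz, vanishes at $0$, converges to $h$ locally uniformly, and satisfies $|h_n|\le|h|$. Because $h_n$ is bounded, $u\mapsto-\Div(h_n(u)E)$ is a compact perturbation of the Leray--Lions operator that does not destroy coercivity, so by the Leray--Lions theorem for pseudo-monotone coercive operators the problem $-\Div(a(x,\un,D\un))=-\Div(h_n(\un)E)+f$ has a solution $\un\in\wpp$ (here $f\in\L m\subset L^{(p^*)'}(\Om)$ since $m>N/p$).

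\textbf{Step 2 (uniform $L^\infty$ bound --- the crux).} This is the main obstacle: the superlinearity of $h$ means $h_n(\un)$ is not controlled a priori --- this is exactly what destroys coercivity of the original operator --- so the convection term cannot be absorbed by a naive Young inequality against the $p$-energy of $\un$. The device is to test, for $k\ge0$, with the weighted truncation
\[
\phi_n=\int_0^{\un}\chi_{\{|S(\sigma)|>k\}}\,(|h(\sigma)|+1)^{-p'}\,d\sigma,
\]
which lies in $\wpp$ because its integrand is $\le1$. The weight $(|h|+1)^{-p'}$ does two things at once: on the left it produces the full $p$-energy of $G_k(S(\un))$, since $\int_{A_k}(|h(\un)|+1)^{-p'}|D\un|^p=\|DG_k(S(\un))\|_{L^p(\Om)}^p$ with $A_k=\{|S(\un)|>k\}$; and on the right it makes $|h_n(\un)|^{p'}/(|h(\un)|+1)^{p'}\le1$, so that after Young's inequality and absorption the convection term leaves only $c\int_{A_k}|E|^{p'}$. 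Using also $|\phi_n|\le|G_k(S(\un))|$, I obtain
\[
\frac{\al}{2}\,\|DG_k(S(\un))\|_{L^p(\Om)}^{p}\le c\int_{A_k}|E|^{p'}+\int_{A_k}|f|\,|G_k(S(\un))|.
\]
Feeding this into the Sobolev embedding and estimating the right-hand side by Hölder --- this is where the hypotheses $r>N$ and $m>N/p$ (together with $p\ge2$) enter, exactly as in Stampacchia's classical lemma --- gives $\|G_k(S(\un))\|_{L^{p^*}(\Om)}\le C|A_k|^{\theta}$ with $\theta p^*>1$ and $C$ independent of $n$. The Stampacchia iteration then forces $\|S(\un)\|_{L^\infty(\Om)}\le k_*$ uniformly in $n$. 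Since $\lim_{|t|\to\infty}|S(t)|=\infty$, the function $S$ is a homeomorphism of $\R$, hence $\|\un\|_{L^\infty(\Om)}\le S^{-1}(k_*)=:L$ uniformly in $n$.

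\textbf{Step 3 (energy bound, compactness, passage to the limit).} With $\|\un\|_{L^\infty(\Om)}\le L$ one has $\|h_n(\un)E\|_{L^{p'}(\Om)}\le(\sup_{[-L,L]}|h|)\|E\|_{L^{p'}(\Om)}$ uniformly, and testing with $\un$ gives a uniform bound for $\un$ in $\wpp$. Passing to a subsequence, $\un\wc u$ in $\wpp$, $\un\to u$ in $\L p$ and a.e., $\|u\|_{L^\infty(\Om)}\le L$, and $h_n(\un)E\to h(u)E$ strongly in $[L^{p'}(\Om)]^N$ by dominated convergence. Testing the equation for $\un$ with $\un-u$ and using these strong--weak convergences shows that $\int_\Om\bigl(a(x,\un,D\un)-a(x,\un,Du)\bigr)\cdot(D\un-Du)\to0$; the strict monotonicity and coercivity in \eqref{llcon} then force $D\un\to Du$ a.e.\ (the usual Leray--Lions/Browder argument), so $a(x,\un,D\un)\to a(x,u,Du)$ a.e.\ and, being bounded in $[L^{p'}(\Om)]^N$, weakly there. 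Passing to the limit in the weak formulation of the approximate problems, $u$ satisfies \eqref{wk} with $\mu=0$ and $u\in\wpp\cap L^\infty(\Om)$, which is the asserted bounded solution.
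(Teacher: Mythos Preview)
Your proof is correct and follows essentially the same route as the paper: the paper truncates $h$, $E$, $f$, tests with $G_{R(k)}(R(\un^+))$ (whose gradient is exactly $\chi_{\{\un^+>k\}}(|h(\un^+)|+1)^{-p'}D\un^+$, i.e.\ your weighted truncation reparametrized by $k$ instead of $S^{-1}(k)$), obtains the level-set inequality for $S(\un)$, and invokes a Stampacchia lemma and then the Leray--Lions compactness (their Lemma~B) to pass to the limit. The only cosmetic differences are that the paper handles $\un^+$ and $\un^-$ separately, uses the boundedness of $R$ to drop the factor $|G_k(S(\un))|$ from the $f$-term, and appeals to Vitali rather than dominated convergence for $h_n(\un)E_n$; none of these changes the substance of the argument.
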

In case we assume a strong monotonicity, uniqueness hold. More precisely we show:
\begin{corollary}(Strong monotonicity)
Assuming \eqref{llcon},\eqref{hcon}, $E\in [\L r]^{N}$ with $r>N$, $f\in\L m$ with $m>(p^{*})'$. Suppose  
\begin{equation}\label{strm}
[a(x,s,\xi)-a(x,s',\xi')](\xi-\xi')\geq \al |\xi-\xi'|^{p}
\end{equation}
 for some $\al>0$. Then uniqueness holds. Moreover, any weak solution of \eqref{main} $u\in\wpp$ that is not bounded but 
 \begin{equation}
|h'(s)|\leq  C(|s|^{\theta}+1)\\\ \mbox{a.e. and }\\\ |v|^{\theta}|E| \in \L{p'}
\end{equation}
for some $\theta>0$, is also unique.
\end{corollary}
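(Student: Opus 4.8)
The plan is to compare two solutions directly. Let $u_{1},u_{2}\in\wpp$ be weak solutions of \eqref{main} and set $w=u_{1}-u_{2}$. Since $w$ is admissible in \eqref{wk} for each equation, subtracting the two identities gives
\[
\int_{\Om}\big(a(x,u_{1},Du_{1})-a(x,u_{2},Du_{2})\big)Dw=\int_{\Om}\big(h(u_{1})-h(u_{2})\big)E\cdot Dw .
\]
The role of \eqref{strm} is that it may be used with \emph{different} second arguments: with $s=u_{1}(x)$, $s'=u_{2}(x)$, $\xi=Du_{1}(x)$, $\xi'=Du_{2}(x)$ it bounds the left side below by $\al\int_{\Om}|Dw|^{p}$. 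In contrast with the usual Leray--Lions monotonicity, which would leave an uncontrolled term measuring the $s$-dependence of $a$, we thus obtain directly $\al\|Dw\|_{L^{p}(\Om)}^{p}\le\int_{\Om}(h(u_{1})-h(u_{2}))E\cdot Dw$, and everything reduces to absorbing the right-hand side, which would force $Dw\equiv0$ and hence $w\equiv0$.

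For the first assertion, take $u_{1},u_{2}\in\wpp\cap L^{\infty}(\Om)$. Since $h\in\mathbf{W}^{1,\infty}_{loc}(\R)$ and the $u_{i}$ are bounded, $|h(u_{1})-h(u_{2})|\le L\,|w|$ with $L=\|h'\|_{L^{\infty}(-M,M)}$ and $M=\max(\|u_{1}\|_{\infty},\|u_{2}\|_{\infty})$, so the convection term is controlled by $L\int_{\Om}|E|\,|w|\,|Dw|$. Here the hypothesis $E\in[\L r]^{N}$ with $r>N$ is decisive: Hölder, Sobolev and Young split this integral into a part absorbed by $\al\|Dw\|_{p}^{p}$ and a remainder supported on the super-level sets $A_{k}=\{|w|>k\}$; because $r>N$ the resulting recursive inequality for $|A_{k}|$ carries a supercritical exponent, and a Stampacchia-type iteration (of the kind behind the boundedness statement above) then yields $w\equiv0$.

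For the second assertion the $L^{\infty}$ bound is gone, so the elementary Lipschitz estimate is unavailable. The growth hypothesis $|h'(s)|\le C(|s|^{\theta}+1)$ gives instead $|h(u_{1})-h(u_{2})|\le C(|u_{1}|^{\theta}+|u_{2}|^{\theta}+1)\,|w|$, and the assumption $|u_{i}|^{\theta}|E|\in[\L{p'}]^{N}$ is exactly what makes $(h(u_{1})-h(u_{2}))E$ pair against $Dw$. Since $w$ may be unbounded I would not test with $w$ itself but with a bounded, monotone modification tuned to the growth of $h$: either $R(u_{1})-R(u_{2})$ --- using that $R$ is bounded and $R(u)\in\wpp$ --- or $T_{k}(w)$ carrying the weight $(|h|+1)^{-p'}$ from the definition of $R$, so that the convection contribution is damped by this weight; uniqueness then follows on letting $k\to\infty$.

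The hard part in both cases is closing the estimate for the convection term. Unlike the linear situation of \cite{boc24}, where $w$ again solves a linear equation with an $L^{r}$, $r>N$, drift and the maximum principle finishes the argument, here the equation satisfied by $w$ is genuinely nonlinear, so one has to lean entirely on the reinforced monotonicity \eqref{strm} together with the subcritical integrability $r>N$ of $E$, and, in the unbounded case, on a test function that compensates the growth of $h$. I expect the unbounded case --- making sense of and estimating $(h(u_{1})-h(u_{2}))E$ with no $L^{\infty}$ control --- to be the more delicate of the two.
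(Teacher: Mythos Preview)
Your starting point --- subtracting the two weak formulations and invoking the reinforced monotonicity \eqref{strm} to get $\al\int|Dw|^{p}$ on the left --- is exactly right, and so is your recognition that the whole point is to absorb the convection term. The gap is in \emph{how} you propose to absorb it. Testing with $w=u_{1}-u_{2}$ and applying Young gives $\tfrac{\al}{2}\|Dw\|_{p}^{p}\le C\int|E|^{p'}|w|^{p'}$; combining H\"older (using $r>N$) with Sobolev only yields $\|Dw\|_{p}^{p}\le C\|E\|_{r}^{p'}\|Dw\|_{p}^{p'}$, which for $p\ge2$ does \emph{not} force $Dw=0$ unless $\|E\|_{r}$ is small --- and no smallness is assumed. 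Your proposed Stampacchia iteration on $A_{k}=\{|w|>k\}$ goes in the wrong direction: on $A_{k}$ the Lipschitz bound $|h(u_{1})-h(u_{2})|\le L|w|$ is \emph{large}, not small, so the recursive inequality does not improve. The trick the paper uses is the opposite localisation: test with $\vf=T_{\e}(u_{1}-u_{2})$, so that $D\vf$ lives on $\{|w|<\e\}$, where the Lipschitz estimate gives $|h(u_{1})-h(u_{2})|\le C\e$. After Young and Poincar\'e one obtains $\e^{p}\,|\{|w|>k\}|\le C\e^{p'}\int|E|^{p'}$ for every $k\ge\e$, and sending $\e\to0$ kills the right-hand side and yields $w=0$.

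For the unbounded case you are overcomplicating matters. There is no need to bring in $R$ or weighted truncations: the same $T_{\e}(u_{1}-u_{2})$ test function works verbatim, the only change being that the constant $L$ in the Lipschitz bound is replaced by the pointwise quantity $C(|u_{1}|^{\theta}+|u_{2}|^{\theta}+1)$ coming from the growth hypothesis on $h'$. The assumption $|u_{i}|^{\theta}|E|\in L^{p'}(\Om)$ is precisely what makes $\int_{\{|w|<\e\}}(|u_{1}|^{\theta}+|u_{2}|^{\theta}+1)^{p'}|E|^{p'}$ finite, and the rest of the argument is identical. Your proposal to test with $R(u_{1})-R(u_{2})$ would in fact run into trouble: $R$ is nonlinear, so $D\big(R(u_{1})-R(u_{2})\big)$ is not a function of $Du_{1}-Du_{2}$ alone, and the strong monotonicity bound no longer matches the left-hand side.
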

In the linear case $a(x,s,\xi)=M(x)\xi$ with $M$ uniformly elliptic, \eqref{strm} is just a consequence of ellipticity. 

The next result address the case when \eqref{Scon} holds but $m < \frac{N}{p}$, which is not strong enough to obtain bounded solutions.

\begin{theorem} 
Assuming \eqref{llcon}, \eqref{strongM} , take
$E\in [\L r]^N$,  with $ r>N$, and 
$f\in  \L{(p^{*})'}$. Then, there exists a unique (possibly unbounded) weak solution $u\in\wpp$ to
\begin{equation} 
\begin{cases}
-\Div(a(x,u,Du))= -\Div(u\log (e+|u|)E(x))+f(x) \qquad & \mbox{in } \Omega,\\
u (x) = 0 & \mbox{on }  \partial \Omega.
\end{cases}
\end{equation}
\end{theorem}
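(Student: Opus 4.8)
The plan is to obtain the solution as a limit of approximate problems; since the operator $A(u)=-\Div(a(x,u,Du)-h(u)E)$ is not coercive, the whole difficulty lies in a uniform a priori bound in $\wpp$.

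\emph{Step 1 (approximation and the basic estimate).} Replace $h$ by its bounded truncations $h_k(s):=h(s)/(1+|h(s)|/k)$, which satisfy $|h_k|\le k$, $|h_k|\le|h|$, $h_k(0)=0$ and $h_k\to h$ locally uniformly, and solve
\[
-\Div(a(x,u_k,Du_k))=-\Div(h_k(u_k)E)+f \ \text{ in }\Om,\qquad u_k=0 \ \text{ on }\partial\Om .
\]
Since $h_k$ is bounded and $r>N\ge p'$, one has $h_k(u_k)E\in[\L{p'}]^{n}$, while $f\in\L{(p^*)'}$ lies in the dual of $\wpp$; the associated operator is then coercive and pseudo-monotone by \eqref{llcon}, so the Leray--Lions theorem produces a weak solution $u_k\in\wpp$ (unique at fixed $k$ by \eqref{strm}). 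Testing with $R(u_k)\in\wpp$ --- admissible because $R$ is bounded, by \eqref{hcon} --- and using $a(x,u_k,Du_k)Du_k\ge\al|Du_k|^p$, $|h_k|\le|h|$ and Young's inequality with the weight $R'(u_k)=(|h(u_k)|+1)^{-p'}$, the convection term is bounded by $\int_\Om|E|^{p'}$ and the source term by $\|R\|_\infty\|f\|_{\L1}$, which gives the uniform estimate
\begin{equation}\label{Sbd}
\int_\Om|DS(u_k)|^p=\int_\Om\frac{|Du_k|^p}{(|h(u_k)|+1)^{p'}}\le C .
\end{equation}
Hence $S(u_k)$ is bounded in $\wpp$, so in $L^{p^*}(\Om)$; since $h(t)=t\log(e+|t|)$ forces $|S(t)|\to\infty$, both $S$ and $S^{-1}$ are homeomorphisms of $\R$, so along a subsequence $S(u_k)\wc w$ in $\wpp$ and a.e., whence $u_k=S^{-1}(S(u_k))\to u:=S^{-1}(w)$ a.e.\ in $\Om$.

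\emph{Step 2 (uniform bound in $\wpp$ --- the crux).} Write the approximate equation as $-\Div(a(x,u_k,Du_k))=-\Div F_k+f$ with $F_k=h_k(u_k)E$, and run a bootstrap exploiting $r>N$: if $u_k$ is bounded in $L^s(\Om)$, then, absorbing the logarithm into an arbitrarily small power, $h(u_k)$ is bounded in $L^\sigma(\Om)$ for every $\sigma<s$, hence $F_k$ is bounded in $L^q(\Om)$ with $1/q=1/\sigma+1/r$; the Stampacchia--Boccardo--Gallou\"et type regularity estimates for Leray--Lions operators with right-hand side $-\Div F+f$ (valid under \eqref{llcon}) then give a uniform bound for $u_k$ in a space $L^{s'}(\Om)$, and the gain $s'>s$ is strict precisely because $r>N$ (one checks that $1/s-1/s'$ stays bounded below by a fixed positive constant, e.g.\ $1/N-1/r$). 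Iterating finitely often, starting from the uniform integrability of $u_k$ provided by \eqref{Sbd}, one reaches a $\sigma$ for which $F_k$ is bounded in $\L{p'}$; testing the equation with $u_k$ and using $\int_\Om f u_k\le\|f\|_{\L{(p^*)'}}\|u_k\|_{L^{p^*}}\le C\|Du_k\|_{\L p}$ together with $\int_\Om F_k\cdot Du_k\le\|F_k\|_{\L{p'}}\|Du_k\|_{\L p}$ and Young's inequality yields $\|Du_k\|_{\L p}\le C$ uniformly. Making this iteration close --- in particular in the borderline case $p=2$, where \eqref{Sbd} alone only provides a double-logarithmic (Orlicz-type) control on $u_k$ and must first be upgraded to a genuine small Lebesgue exponent, e.g.\ via a logarithmic test function with derivative $(\log(e+|u_k|))^{-p'}$ --- is the main obstacle, and it is exactly here that the hypotheses $r>N$ and the (no faster than) $u\log(e+|u|)$ growth are used.

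\emph{Step 3 (passage to the limit).} With $u_k$ bounded in $\wpp$ and $u_k\to u$ a.e., we get $u_k\wc u$ in $\wpp$ (so $u\in\wpp$), $h(u_k)E\to h(u)E$ in $[\L{p'}]^{n}$ (a.e.\ convergence plus the uniform $\L{p'}$ bound on $F_k$, via Vitali), and $a(x,u_k,Du_k)$ bounded in $[\L{p'}]^{n}$. The Boccardo--Murat argument then gives a.e.\ convergence of the gradients: testing the equation for $u_k$ with $u_k-u$ (or with $T_\ell(u_k-u)$) and using the already-established convergences forces $\int_\Om[a(x,u_k,Du_k)-a(x,u_k,Du)](Du_k-Du)\to0$, whence, by the strict monotonicity in \eqref{llcon} (and in fact strong $\wpp$ convergence by \eqref{strm}), $Du_k\to Du$ a.e. Consequently $a(x,u_k,Du_k)\wc a(x,u,Du)$ in $[\L{p'}]^{n}$, and passing to the limit in \eqref{wk} shows that $u$ solves the stated Dirichlet problem.

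\emph{Step 4 (uniqueness).} Here $|h'(s)|=\bigl|\log(e+|s|)+\tfrac{|s|}{e+|s|}\bigr|\le C(|s|^\theta+1)$ for \emph{every} $\theta>0$, and, since any weak solution belongs to $L^{p^*}(\Om)$ while $E\in\L r$ with $r>N\ge p>p'$, Hölder's inequality gives $|u|^\theta|E|\in\L{p'}$ as soon as $0<\theta\le p^*\bigl(\tfrac1{p'}-\tfrac1r\bigr)$, a positive threshold. Thus the hypotheses of the uniqueness part of the Corollary are met, and by its argument --- test the difference $u_1-u_2$ of two solutions with itself, bound the left-hand side below by $\al\int_\Om|D(u_1-u_2)|^p$ using \eqref{strm}, and absorb the convection difference $\int_\Om(h(u_1)-h(u_2))E\cdot D(u_1-u_2)$ via the above bounds --- the solution constructed in Step 3 is the unique weak solution in $\wpp$, completing the proof.
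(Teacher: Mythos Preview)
Your Steps 1, 3 and 4 are essentially correct and parallel the paper's treatment. The genuine gap is Step 2, which you yourself flag as ``the main obstacle'' but do not actually resolve. The bootstrap you sketch requires a starting Lebesgue bound $u_k\in L^{s_0}(\Om)$ with $s_0$ large enough that $F_k=h(u_k)E\in L^q$ for some $q\ge p'$ --- otherwise the Stampacchia test with $G_j(u_k)$ plus Young's inequality does not close, and the regularity step you invoke does not apply. But your only input is the bound on $\int_\Om|DS(u_k)|^p$: for $h(s)=s\log(e+|s|)$ and $p=2$ (which is allowed here, since the paper assumes $p\ge 2$), one has $S(t)\sim\log\log t$, so this says merely that $\log\log(e+|u_k|)$ is bounded in $L^{2^*}(\Om)$ --- far too weak to put $F_k$ in any $L^q$ with $q\ge 2$. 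Your suggested upgrade, a test function ``with derivative $(\log(e+|u_k|))^{-p'}$'', is itself unbounded (it grows like $s/(\log s)^{p'}$) and does not obviously yield a Lebesgue bound either. So the iteration cannot get started in the borderline case, and the sentence ``Making this iteration close \ldots\ is the main obstacle'' is, as written, an admission rather than a resolution.

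The paper avoids the bootstrap entirely. Testing with $u_n$ and applying H\"older with the three exponents coming from $u_n\in L^{p^*}$, $E\in L^r$ and a remaining logarithmic factor reduces the uniform $\wpp$ bound to controlling $\|\log^{j}(e+|u_n|)\|_{p^*}$ for a suitable $j>1$. This is obtained in one stroke by the \emph{bounded} test function
\[
\varphi=\int_0^{u_n}\frac{\log^{p(j-1)}(e+|s|)}{(e+|s|)^p}\,ds,
\]
whose derivative turns the coercivity term into $\alpha j^{-p}\int_\Om|D\log^j(e+|u_n|)|^p$; the denominator $(e+|u_n|)^{p-1}$ kills the convection factor $|u_n|\log(e+|u_n|)$ up to a harmless power of $\log$, and one more split at level $k$ together with $r>N$ absorbs what remains. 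This specific bounded logarithmic test function is the idea missing from your Step~2.
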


In case \eqref{Scon} fails, we can still prove existence of solutions with a smallness assumption:
\begin{theorem} 
Suppose \eqref{llcon},\eqref{strongM} and consider $\theta>0, m\in [(p^{*})',\frac{N}{p}]$ such that
\begin{equation}\label{tcon}
0<\frac{\theta}{s}=\frac{p-1}{N}-\frac{1}{r},\; f\in\L{m},\; E \in [\L{r}]^{n},
\end{equation}
where $s=\frac{Nm}{N-mp}=m^{\overbrace{**\ldots*}^{p\text{-times}}}$. If 
\begin{equation}
\norma{f}{m}\norma{E}{r}^{\frac{1}{\theta}}\le \frac{\theta}{\So}\left(\frac{\alpha p^{*}}{\So s(\theta +1)}\right)^{1+\frac{1}{\theta}},
\end{equation}
then the Dirichlet problem 
\begin{equation}\label{lows}
\begin{cases}
 -\Div(a(x,u,Du))= -\Div(u|u|^{\theta}E(x))+f(x) \qquad & \mbox{in } \Omega,\\
u (x) = 0 & \mbox{on }  \partial \Omega,
\end{cases}
\end{equation}
has a unique solution $u\in\wpp\cap\L{s}$.
\end{theorem}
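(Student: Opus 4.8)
\emph{Proof strategy.}
Set $\Psi(\sigma):=\sigma|\sigma|^{\theta}$, so that $\Psi(0)=0$, $\Psi'(\sigma)=(\theta+1)|\sigma|^{\theta}$ and $|\Psi(\sigma)-\Psi(\tau)|\le(\theta+1)(|\sigma|^{\theta}+|\tau|^{\theta})|\sigma-\tau|$. First note that the exponent relation $\tfrac{\theta}{s}=\tfrac{p-1}{N}-\tfrac1r$ in \eqref{tcon} is exactly what makes \eqref{wk} meaningful on $\wpp\cap\L s$: since $\tfrac{1+\theta}{s}+\tfrac1r=\tfrac1m-\tfrac1N\le\tfrac1{p'}$, for $u\in\L s$ Hölder's inequality gives $\Psi(u)E\in[\L{p'}]^{n}\hookrightarrow W^{-1,p'}(\Om)$, while $f\in\L m\subset\L{(p^{*})'}$. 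The plan has four steps: (i) solve bounded-truncation problems by a fixed point in a small ball of $\L q$, $q<p^{*}$; (ii) the a priori estimate closing that ball \emph{is} the smallness hypothesis; (iii) pass to the limit, using \eqref{strm} for the a.e.\ convergence of gradients; (iv) obtain uniqueness from an energy estimate that again uses smallness. For \textbf{Step 1}, put $\Psi_k:=T_k\circ\Psi$ with $T_k(\sigma):=\max\{-k,\min\{k,\sigma\}\}$, so $|\Psi_k(\sigma)|\le|\sigma|^{1+\theta}$ and $\Psi_k(v)E\in[\L r]^{n}\hookrightarrow W^{-1,p'}(\Om)$ for every measurable $v$; then for $v\in\L q$ the coercive Leray--Lions problem $-\Div(a(x,w,Dw))=-\Div(\Psi_k(v)E)+f$ has, by \eqref{strm}, a unique weak solution $w=:\mathcal T_k v\in\wpp$, and $\mathcal T_k$ is continuous and compact on $\L q$ (its image is bounded in $\wpp\hookrightarrow\hookrightarrow\L q$, and continuity rests on the continuous dependence of Leray--Lions solutions on the datum, itself a consequence of the a.e.\ gradient convergence granted by \eqref{strm}).

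\textbf{Step 2 (a priori estimate and fixed point).}
Pick $\g\ge0$ with $(2\g/p+1)p^{*}=s$ and test the $\mathcal T_k v$-equation with $\varphi=|T_j(\mathcal T_k v)|^{2\g}T_j(\mathcal T_k v)\in\wpp\cap L^{\infty}(\Om)$. Using $a(x,s,\xi)\xi\ge\al|\xi|^{p}$, the identity $D(|z|^{2\g}z)=(2\g+1)|z|^{2\g}Dz$, $|\Psi_k(v)|\le|v|^{1+\theta}$, Hölder's inequality on the convection term (peeling off the factor $|\mathcal T_k v|^{2\g/p}|D\mathcal T_k v|$), the Sobolev inequality $\norma{z}{p^{*}}\le\So\norma{Dz}{p}$, and Fatou's lemma as $j\to\infty$ — with \eqref{tcon} making every exponent land on $\norma{v}{s}$ and $\norma{\mathcal T_k v}{s}$ — one is led to
\[
\frac{\al p^{*}}{\So s}\,\norma{\mathcal T_k v}{s}\ \le\ \norma{E}{r}\,\norma{v}{s}^{\,\theta+1}\ +\ \So\,\norma{f}{m}.
\]
The map $t\mapsto\tfrac{\al p^{*}}{\So s}t-\norma{E}{r}t^{\theta+1}$ attains its maximum at $t_{*}=\bigl(\tfrac{\al p^{*}}{\So s(\theta+1)\norma{E}{r}}\bigr)^{1/\theta}$ with value $\tfrac{\theta}{\theta+1}\tfrac{\al p^{*}}{\So s}t_{*}$, and the hypothesis on $\norma{f}{m}\norma{E}{r}^{1/\theta}$ says precisely that this maximum is $\ge\So\norma{f}{m}$; hence $\tfrac{\al p^{*}}{\So s}t-\norma{E}{r}t^{\theta+1}=\So\norma{f}{m}$ has a smallest root $\rho\in(0,t_{*}]$, and the displayed inequality gives $\norma{v}{s}\le\rho\Rightarrow\norma{\mathcal T_k v}{s}\le\rho$. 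Thus $\mathcal T_k$ maps the closed convex set $\{v\in\L q:\norma{v}{q}\le R,\ \norma{v}{s}\le\rho\}$ into itself once $R$ is large enough, and Schauder's theorem produces a fixed point $u_k\in\wpp\cap\L s$, $\norma{u_k}{s}\le\rho$, solving \eqref{lows} with $\Psi_k$ in place of $\Psi$; testing with $u_k$ gives in addition $\norma{Du_k}{p}\le C$, all bounds being uniform in $k$.

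\textbf{Step 3 (passage to the limit).}
Along a subsequence $u_k\rightharpoonup u$ in $\wpp$, $u_k\to u$ a.e.\ and in $\L q$, with $\norma{u}{s}\le\rho$ by Fatou. The uniform $\L s$ bound together with \eqref{tcon} makes $\Psi_k(u_k)E$ equi-integrable in $[\L{p'}]^{n}$, so $\Psi_k(u_k)E\to\Psi(u)E$ strongly in $[\L{p'}]^{n}$ by Vitali's theorem. To handle $a(x,u_k,Du_k)$ one proves $Du_k\to Du$ a.e.: subtract the limiting identity, test the difference with $u_k-u$, invoke the strong convergence just obtained and the Minty--Browder/Leray--Lions monotonicity trick based on \eqref{strm}, and extract the a.e.\ convergence of $Du_k$, whence $a(x,u_k,Du_k)\rightharpoonup a(x,u,Du)$ in $[\L{p'}]^{n}$. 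Passing to the limit in the weak formulation shows that $u\in\wpp\cap\L s$ solves \eqref{lows}.

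\textbf{Step 4 (uniqueness) and the main obstacle.}
If $u_1,u_2\in\wpp\cap\L s$ both solve \eqref{lows}, subtract the equations and test with $u_1-u_2$: by \eqref{strm},
\[
\al\,\norma{D(u_1-u_2)}{p}^{p}\ \le\ \int_{\Om}\bigl(\Psi(u_1)-\Psi(u_2)\bigr)E\,D(u_1-u_2),
\]
and bounding the right-hand side with $|\Psi(u_1)-\Psi(u_2)|\le(\theta+1)(|u_1|^{\theta}+|u_2|^{\theta})|u_1-u_2|$, Hölder's inequality, the bound $\norma{u_i}{s}\le\rho$, the Sobolev inequality (and, for $p>2$, a Young splitting) reduces it to a quantity whose multiplicative constant is dominated by the smallness threshold, forcing $D(u_1-u_2)=0$. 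The delicate part of the whole argument is \textbf{Step 2}: carrying the constants through the power test functions and the Hölder/Sobolev steps so that the threshold closing the small ball comes out \emph{exactly} as $\tfrac{\theta}{\So}\bigl(\tfrac{\al p^{*}}{\So s(\theta+1)}\bigr)^{1+1/\theta}$ — and, when $p>2$, so that the resulting inequality still closes. By comparison, the a.e.\ gradient convergence of Step 3 is by now standard once \eqref{strm} is available, and Step 4 uses only the a priori radius $\rho$.
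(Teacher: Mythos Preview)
Your existence argument is essentially the paper's: both use Schauder's fixed point together with the power test function $\varphi\sim|u|^{p(\gamma-1)}u$, $\gamma=s/p^{*}$ (your $2\gamma$ is the paper's $p(\gamma-1)$), to obtain the key inequality $\norma{u}{s}\le \frac{\So^{2}s}{\alpha p^{*}}\norma{f}{m}+\frac{\So s}{\alpha p^{*}}\norma{E}{r}\norma{v}{s}^{1+\theta}$ and then close an invariant ball via the smallness hypothesis. The paper applies Schauder directly to $T:\L s\to\L s$, $T(v)=u$, without truncating $h$ (the frozen linear-in-$v$ problem is already well posed since $v|v|^{\theta}E\in[\L{p'}]^{n}$), and proves compactness in $\L s$ through the $G_{l}$-tail estimate; your extra truncation layer $\Psi_{k}=T_{k}\circ\Psi$ and subsequent limit $k\to\infty$ are harmless but unnecessary.

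The gap is in your uniqueness step. The paper does \emph{not} use the smallness condition for uniqueness; it invokes Corollary~\ref{strongcor}, testing with $T_{\e}(u_{1}-u_{2})$ and using the local Lipschitz bound $|h(u_{1})-h(u_{2})|\le C(|u_{1}|^{\theta}+|u_{2}|^{\theta}+1)|u_{1}-u_{2}|$ together with $|u_{i}|^{\theta}|E|\in\L{p'}$ (which follows from $u_{i}\in\L s$ and \eqref{tcon}), and then lets $\e\to 0$. Your approach---test with $u_{1}-u_{2}$ and absorb via smallness---has two problems. First, you invoke $\norma{u_{i}}{s}\le\rho$, but for an \emph{arbitrary} solution in $\wpp\cap\L s$ the a priori estimate only yields $\norma{u}{s}\le a+b\norma{u}{s}^{1+\theta}$, which has a large branch as well; you have not shown every solution lies in the small ball. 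Second, and more seriously, for $p>2$ the powers do not close: after H\"older and Sobolev you get $\alpha\norma{D(u_{1}-u_{2})}{p}^{p}\le C\rho^{\theta}\norma{E}{r}\norma{D(u_{1}-u_{2})}{p}^{2}$ (or, with your ``Young splitting'', $\le C\norma{D(u_{1}-u_{2})}{p}^{p'}$), and since $2<p$ (resp.\ $p'<p$) this gives only a bound, not $D(u_{1}-u_{2})=0$. The $T_{\e}$-truncation device used in the paper is precisely what circumvents this homogeneity mismatch.
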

Finally, our last result address \eqref{main2}, where no smallness assumption is needed but a lower order term ``$\mu u$'' is added.
\begin{theorem}
Suppose \eqref{llcon},\eqref{strongM} and consider $\theta<\frac{p-1}{N}, E\in [\L{r}]^{N}$ with $r=\frac{N}{p-1-\theta N}$, and $f\in \L{(p^{*})'}$. If $h(s)=s^{p-1}|s|^{\theta}$, then  \eqref{main2} has a unique solution $u\in\wpp$.
\end{theorem}
\subsection{Notation}
\begin{itemize}
\item[-] $\Om\subset \R^{n}$ is a bounded domain.
\item[-] The space $\wpp$ denotes the usual Sobolev space which is the closure of $\mathcal{C}^{\infty}_{0}(\Om)$, smooth functions with compact support, in the $p$-norm. 
\item[-] For $1<p<\infty$, the \textit{p-Laplacian} $\Delta_{p}$ is given by $-\Div (|Du|^{p-2}Du)$. 
\item[-] For $q>0$, $q'$ denotes the Holder conjugate, i.e. $\frac{1}{q}+\frac{1}{q'}=1$, and $q^{*}$ denotes the Sobolev conjugate, defined by $q^{*}=\frac{qn}{n-q}>q$, where $n$ is the dimension of the domain $\Om\subset \R$. 
\item[-] Given a function $u(x)$, we denote its positive part by $u^{+}(x)=\max(0,u(x))$.
\item[-] We will use the somewhat standard notation for Stampacchia's truncation functions (see \cite{boc13}):
\[
T_k(s)=\max\{-k,\min\{s,k\}\}, \ \ \ G_k(s)=s-T_k(s), \quad \mbox{ for } k>0.
\]
\item[-] The letter $C$ will always denote a positive constant which may vary from line to line. 
\item[-] The Lebesgue measure of a set $A\subseteq \R^{n}$ is denoted by $|A|$.
\item[-] The symbol $\rightharpoonup$ denotes weak convergence. 
\end{itemize}
\section{Proof of the results}
\subsection{Existing Lemmata}
We will need the following lemmata from the book \cite{boc13}:
\vspace{0.1in}
\begin{lemmaletter}\label{lemA}\cite[Lem~6.2]{boc13}
Let $f\in L^{1}$, $g(k)=\int_{\Om} |G_{k}(f)|$ and $A_{k}=\{ |f|>k \}$. Suppose
\[
g(k)\le \beta |A_{k}|^{\al}
\]
for some $\al>1$ and $\beta>0$. Then $f\in \L{\infty}$ and 
\[
\norma{f}{\infty}\le C\beta 
\]
for some $C=C(\al,\Om)$. 
\end{lemmaletter}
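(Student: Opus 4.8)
The plan is to translate the assumed estimate into an autonomous differential inequality for the non-increasing function $g$ and then integrate it to force $g$ to vanish in finite ``time''.

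First I would record the layer-cake identity. Since $|G_{k}(f)| = (|f|-k)^{+}$, Cavalieri's principle gives
\[
g(k) = \int_{\Om}(|f|-k)^{+}\,\dx = \int_{k}^{\infty}|A_t|\,dt ,
\]
so $g$ is absolutely continuous and non-increasing on $[0,\infty)$, with $g'(k) = -|A_k|$ for a.e.\ $k$; moreover, taking $k=0$ in the hypothesis and using $A_0\subseteq\Om$ one gets $g(0) = \norma{f}{1}\le \beta|\Om|^{\al}$. Write $\{g>0\}=[0,k_{\infty})$ with $k_{\infty}=\sup\{k:g(k)>0\}\in(0,\infty]$. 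On this interval $|A_k|>0$, because $|A_k|=0$ would force $|A_t|=0$ for all $t\ge k$ by monotonicity and hence $g(k)=0$; so there the hypothesis reads $-g'(k) = |A_k| \ge \beta^{-1/\al}\,g(k)^{1/\al}$ a.e.

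Now set $\psi(k) = g(k)^{1-1/\al}$. Since $\al>1$ the exponent $1-\tfrac1\al$ is positive, $\psi\ge 0$, and a direct computation combined with the previous inequality gives $\psi'(k) = (1-\tfrac1\al)\,g(k)^{-1/\al}\,g'(k) \le -(1-\tfrac1\al)\,\beta^{-1/\al}$ for a.e.\ $k\in[0,k_{\infty})$. Integrating from $0$ shows $\psi$ cannot remain positive: it must reach $0$ at some finite value, so $k_{\infty}\le \tfrac{\al}{\al-1}\,\beta^{1/\al}\,g(0)^{1-1/\al}<\infty$, and then $g\equiv 0$ on $[k_{\infty},\infty)$, i.e.\ $|A_k|=0$ for all $k\ge k_{\infty}$, i.e.\ $|f|\le k_{\infty}$ a.e. Substituting $g(0)\le \beta|\Om|^{\al}$ yields $\norma{f}{\infty}\le k_{\infty}\le \tfrac{\al}{\al-1}|\Om|^{\al-1}\,\beta$, which is the claim with $C = \tfrac{\al}{\al-1}|\Om|^{\al-1}=C(\al,\Om)$.

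An equivalent route is the classical Stampacchia iteration: from $d\,|A_{k+d}|\le \int_{A_k}(|f|-k)\,\dx = g(k)\le \beta|A_k|^{\al}$ one obtains the super-linear recursion $|A_{k+d}|\le (\beta/d)\,|A_k|^{\al}$, and choosing geometrically growing increments $d_j$ forces $|A_{k_j}|\to 0$ with $\sum_j d_j<\infty$, giving the same conclusion. The only genuinely delicate point in either approach is the measure-theoretic bookkeeping — justifying that $g$ is absolutely continuous with $g'=-|A_{\cdot}|$ a.e., and controlling the behaviour at the endpoint of $\{g>0\}$ — but this is routine once the layer-cake representation above is in hand.
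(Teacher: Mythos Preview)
The paper does not give its own proof of this lemma: it is quoted verbatim from \cite[Lem.~6.2]{boc13} in the ``Existing Lemmata'' subsection and used as a black box, so there is no in-paper argument to compare against.

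That said, your argument is correct and is essentially the standard one. The layer-cake identity $g(k)=\int_k^\infty |A_t|\,dt$ is valid because $t\mapsto |A_t|$ is non-increasing and integrable (its integral is $\norma{f}{1}$), which makes $g$ absolutely continuous with $g'(k)=-|A_k|$ a.e.; the ODE step $\psi'\le -(1-\tfrac1\al)\beta^{-1/\al}$ and the resulting bound $k_\infty\le \tfrac{\al}{\al-1}|\Om|^{\al-1}\beta$ are computed correctly. The alternative you sketch --- the discrete Stampacchia recursion $|A_{k+d}|\le(\beta/d)|A_k|^{\al}$ with geometric increments --- is in fact the version one usually finds in textbook treatments (including the style of \cite{boc13}); your continuous ODE version is a clean equivalent that avoids the bookkeeping of the dyadic iteration and gives an explicit constant directly.
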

\vspace{0.1in}
\begin{lemmaletter}\label{lemB}\cite[Lem~5.9]{boc13}
Let $\un,u\in\wpp$ such that $\un\rightharpoonup u$ in $\wpp$. If
\[
\int_{\Om} [a(x,\un,D\un)-a(x,u,Du)]\cdot (D\un-Du)\to 0
\]
then $a(x,\un,D\un) \rightharpoonup a(x,u,Du)$ in $\L{p'}$.
\end{lemmaletter}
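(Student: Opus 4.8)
The plan is to reduce the weak convergence in $\L{p'}$ to an almost-everywhere convergence of the gradients, $D\un\to Du$ a.e. in $\Om$, and then to upgrade that pointwise convergence to weak convergence using the growth bound in \eqref{llcon}. Throughout I argue along subsequences and recover the full sequence at the end by the subsequence principle. As a preliminary, since $\un\wc u$ in $\wpp$, Rellich--Kondrachov gives $\un\to u$ strongly in $\L{p}$ and, up to a subsequence, $\un\to u$ a.e. I then split the hypothesis integrand as
\[
[a(x,\un,D\un)-a(x,u,Du)]\cdot(D\un-Du)=P_n+Q_n,
\]
where $P_n=[a(x,\un,D\un)-a(x,\un,Du)]\cdot(D\un-Du)$ and $Q_n=[a(x,\un,Du)-a(x,u,Du)]\cdot(D\un-Du)$. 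The term $P_n\ge 0$ by the strict monotonicity in \eqref{llcon}. For $Q_n$, the growth bound together with $\un\to u$ a.e. and in $\L{p}$ forces $a(x,\un,Du)\to a(x,u,Du)$ strongly in $\L{p'}$ by dominated convergence with the $\L{p'}$-convergent majorant $b+|\un|^{p-1}+|Du|^{p-1}$; since $D\un-Du\wc 0$ in $\L{p}$, the pairing gives $\int_{\Om}Q_n\to 0$. As $\int_{\Om}[a(x,\un,D\un)-a(x,u,Du)]\cdot(D\un-Du)\to 0$ by assumption, I conclude $\int_{\Om}P_n\to 0$, and because $P_n\ge 0$ this means $P_n\to 0$ in $\L{1}$, hence (along a further subsequence) $P_n\to 0$ a.e.

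The main step is the pointwise convergence of gradients. I fix an a.e. point $x$ where $\un(x)\to u(x)$, $P_n(x)\to 0$, and $b(x),Du(x)$ are finite. Expanding $P_n(x)$ into four terms, applying coercivity to $a(x,\un,D\un)\cdot D\un\ge\al|D\un|^{p}$ and the growth bound to the remaining three (the values $|\un(x)|,b(x),|Du(x)|$ being now bounded), I obtain a coercive lower bound of the form
\[
P_n(x)\ge\al|D\un(x)|^{p}-C\big(1+|D\un(x)|^{p-1}+|D\un(x)|\big),
\]
with $C$ depending on $x$. Since $P_n(x)$ stays bounded, the superlinear term $\al|D\un(x)|^{p}$ forces $\{D\un(x)\}$ to be bounded. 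Any cluster value $\bar\xi$ of $D\un(x)$ then satisfies, by continuity of $a$ in $(s,\xi)$ and $\un(x)\to u(x)$,
\[
[a(x,u,\bar\xi)-a(x,u,Du)]\cdot(\bar\xi-Du)=\lim P_{n}(x)=0,
\]
and strict monotonicity yields $\bar\xi=Du(x)$. Hence $D\un(x)\to Du(x)$, so $D\un\to Du$ a.e., and consequently $a(x,\un,D\un)\to a(x,u,Du)$ a.e. by continuity of $a$.

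It remains to pass from a.e. convergence to weak convergence in $\L{p'}$. The growth bound and the identity $(p-1)p'=p$ give
\[
\int_{\Om}|a(x,\un,D\un)|^{p'}\le C\int_{\Om}\big(b^{p'}+|\un|^{p}+|D\un|^{p}\big),
\]
which is bounded since $b\in\L{p'}$ and $\un$ is bounded in $\L{p}$ and in $\wpp$. Thus $a(x,\un,D\un)$ is bounded in the reflexive space $\L{p'}$ and converges a.e. to $a(x,u,Du)$; the standard lemma that boundedness in $\L{p'}$ plus a.e. convergence implies weak convergence (proved via Egorov and absolute continuity of the integral) yields $a(x,\un,D\un)\wc a(x,u,Du)$ in $\L{p'}$ along the subsequence. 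Applying this argument to an arbitrary subsequence and noting that the weak limit $a(x,u,Du)$ never changes, the subsequence principle delivers the conclusion for the full sequence.

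I expect the second step to be the crux: extracting $D\un\to Du$ a.e. from the mere $\L{1}$-smallness of the monotone quantity $P_n$ is where strict monotonicity, coercivity, and the growth bound must be combined pointwise, and it is precisely the lack of \emph{strong} monotonicity here that rules out a more direct gradient estimate and forces the pointwise cluster-value argument.
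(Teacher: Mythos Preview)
The paper does not give its own proof of this lemma; it is simply quoted from \cite[Lem.~5.9]{boc13}. Your argument is correct and is in fact the standard route taken in that reference: one splits off the strictly monotone part $P_n$, shows $\int_{\Om}Q_n\to 0$ by the strong $\L{p'}$-convergence of $a(x,\un,Du)$ (generalized dominated convergence via the $\L{p'}$-convergent majorant), deduces $P_n\to 0$ a.e., runs the pointwise coercivity/cluster-point argument to get $D\un\to Du$ a.e., and finishes with bounded-in-$\L{p'}$ plus a.e.\ convergence implies weak convergence. There is nothing to compare here beyond noting that your write-up matches the classical proof.
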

\subsection{General case with high summability of the source}
\vspace{0.1in}
\begin{theorem} \label{decay}
Assuming \eqref{llcon},\eqref{hcon}, $E\in [\L r]^{N}$ with $r>N$, $f\in\L m$ with $m>\frac{N}{p}$, and suppose the following condition holds:
\begin{equation}\label{Scon}
\lim_{|t|\to \infty}|S(t)|=\infty.
\end{equation}
Then the Dirichlet problem \eqref{main} has a bounded solution $u\in\wpp\cap L^{\infty}(\Om)$.
\end{theorem}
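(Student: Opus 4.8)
The plan is to construct the bounded solution by approximation: replace $h$ by a truncated, bounded nonlinearity so that the associated operator becomes coercive and the Leray--Lions theorem applies, obtain solutions $u_n$ to the approximating problems, derive a priori bounds independent of $n$, and pass to the limit. Concretely, I would set $h_n(s)=T_n(h(s))$ (or work with $a_n(x,s,\xi)=a(x,T_n(s),\xi)$ together with $h(T_n(s))$), so that for each fixed $n$ the term $-\Div(h_n(u)E(x))$ is a bounded perturbation and the modified operator $A_n(u)=-\Div(a(x,u,Du)-h_n(u)E(x))$ is pseudomonotone and coercive on $\wpp$; hence \eqref{main} with $h$ replaced by $h_n$ admits a weak solution $u_n\in\wpp$.

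The heart of the matter is the uniform $L^\infty$ bound. Here I would test the equation for $u_n$ with a function built from the composition $S$, exploiting that $R(u_n)\in\wpp$ and $R$ is bounded. The natural test function is something like $\varphi = G_k(\Psi(u_n))$ where $\Psi$ is a primitive chosen so that the convection term $\int h(u_n)E\cdot D\varphi$ is absorbed; the point of the definition of $S(t)=\int_0^t(|h(s)|+1)^{-p'/p}\,ds$ is precisely that, after using \eqref{llcon} (the coercivity $a\xi\ge\alpha|\xi|^p$) on the left and Young's inequality on the right, the weight $(|h|+1)^{-p'}$ kills the superlinear factor $h(u_n)$ coming from the convection, leaving an estimate for $G_k(S(u_n))$ in terms of $\int_{A_k}|f|$ plus a term controlled by $\|E\|_r$. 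Using $f\in L^m$ with $m>N/p$ and $E\in L^r$ with $r>N$, Hölder and Sobolev inequalities on the truncated level sets yield $g(k)=\int_\Omega|G_k(S(u_n))|\le \beta|A_k|^{\gamma}$ with $\gamma>1$, so Lemma~\ref{lemA} gives $S(u_n)\in L^\infty$ with a bound independent of $n$. Since \eqref{Scon} forces $S$ to be a proper map with $|S(t)|\to\infty$, a uniform bound on $\|S(u_n)\|_\infty$ translates into a uniform bound on $\|u_n\|_\infty$, and then $h_n(u_n)=h(u_n)$ for $n$ large, so $u_n$ actually solves the original problem once $n$ exceeds this bound; alternatively one keeps the truncation and removes it in the limit.

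With $\|u_n\|_\infty\le C$ in hand, testing with $u_n$ itself and using coercivity gives a uniform $\wpp$ bound, so up to a subsequence $u_n\rightharpoonup u$ in $\wpp$ and $u_n\to u$ a.e.; the limit $u$ inherits the $L^\infty$ bound. To pass to the limit in the quasilinear term $a(x,u_n,Du_n)$ I would run the standard Leray--Lions argument: test the difference of equations (for $u_n$ and a suitable comparison) with $u_n-u$, use the strict monotonicity in \eqref{llcon} together with the boundedness of $h(u_n)E\cdot(Du_n-Du)$ (which is controlled since $h(u_n)$ is uniformly bounded and $E\in L^r\subset L^{p'}$), and conclude $\int_\Omega[a(x,u_n,Du_n)-a(x,u,Du)]\cdot(Du_n-Du)\to0$; Lemma~\ref{lemB} then yields $a(x,u_n,Du_n)\rightharpoonup a(x,u,Du)$ in $L^{p'}$, and $h(u_n)E\to h(u)E$ strongly by dominated convergence, so $u$ is a weak solution of \eqref{main}.

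The main obstacle I anticipate is the choice of the correct test function and the bookkeeping in the a priori estimate: one must pick the primitive of the right power of $(|h|+1)$ so that the superlinear convection is exactly absorbed into the coercivity term, and then verify that the resulting reverse-Hölder inequality $g(k)\le\beta|A_k|^\gamma$ genuinely has exponent $\gamma>1$ under the hypotheses $m>N/p$ and $r>N$ — this is where the interplay of the Sobolev exponent $p^*$, the summability of $f$, and the summability of $E$ is delicate, and it is the step that most directly generalizes the linear computation of \cite{boc24} to the Leray--Lions setting.
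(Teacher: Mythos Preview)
Your proposal is correct and follows essentially the same route as the paper: truncate to get approximants $u_n$, obtain a uniform $L^\infty$ bound via a nonlinear test function that absorbs the superlinear convection and invokes Lemma~\ref{lemA} on $S(u_n)$, then use \eqref{Scon}, bootstrap to a $\wpp$ bound, and pass to the limit with Lemma~\ref{lemB}. The only point the paper makes explicit that you leave open is the exact choice of primitive: the test function is $\varphi=G_{R(k)}(R(u_n^+))$ (built from $R$, not $S$), so that the coercivity yields $\alpha\int_{\{u_n^+>k\}}|Du_n^+|^p/(|h(u_n^+)|+1)^{p'}=\alpha\int_{\{u_n^+>k\}}|DS(u_n^+)|^p$ on the left, while on the right Young's inequality produces $C\int_{\{u_n^+>k\}}|E|^{p'}$ plus half of the left-hand side; this is exactly the mechanism you describe.
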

\begin{proof} 
Fix $n>0$ and let $h_{n}(s)=T_n(h(s)),f_{n}(x) = T_n(u(x))$ and $E_{n}(x)=T_n(E(x))$ be the vector field $E(x)$ whose components are truncated by $n$. Classical results by Leray-Lions (see \cite{boc13}) guarantee the existence of a function $u_{n}\in\wpp$ satisfying
\begin{equation}\label{twk}
\int_{\Omega} a(x,u_{n},Du_{n}) D \varphi =\int_{\Omega}h_{n}(u_{n})E_{n}(x) D \varphi  + \int_{\Omega} f_{n}(x) \varphi \quad \forall \varphi \in  \wpp.
\end{equation}
Additionally, Stampacchia's regularity gives us that $u_{n}\in\wpp\cap L^{\infty}(\Om)$, so the idea is to find estimates for $u_{n}$ independent of $n$.

Now take $\vf= G_{R(k)}(R(u^{+}_{n}))$ and use Young's inequality in equation \eqref{twk} to get:
\[
\begin{split}
\alpha\int_{\{k<\un^+\}} \frac{|D \un^+|^p}{(|h(\un^+)|+1)^{p'}}\le & \int_{\{k<\un^+\}} |h(\un^+)||E| \frac{|D \un^+|}{(|h(\un^+)|+1)^{p'}}+\int_{\{k<\un^+\}} |f| R(\un^+)\\
\le &C \int_{\{k<\un^+\}} |E(x)|^{p'}+\frac\alpha 2\int_{\{k<\un^+\}} \frac{|D \un^+|^p}{(|h(\un^+)|+1)^{p'}}+C \int_{\{k<\un^+\}}|f(x)|,
\end{split}
\] 
so
\[
\frac{\al}{2}\int_{\{k<\un^+\}} |D S(u_{n}^+)|^{p}\le C\left( \int_{\{k<\un^+\}} |E(x)|^{p'}+ \int_{\{k<\un^+\}}|f(x)|\right)
\]
By Sobolev's inequality and the fact that $\{ x \in\Om\;|\; u_{n}^+>k\} = \{ x \in\Om\;|\; S(u_{n}^+)>S(k)\}$, we obtain:
\[
\frac{\al}{2}\left(\int_{\{k<\un^+\}} |G_{S(k)}(S(\un^+))|^{p^{*}}\right)^{\frac{p}{p^{*}}}\le \So\frac{\al}{2}\int_{\{k<\un^+\}} |D S(\un^+)|^{p}
\]
Combining everything we obtain:
\[
\left(\int_{\Om} |G_{S(k)}(S(\un^+))|^{p^{*}}\right)^{\frac{p}{p^{*}}}\le C\left(\int_{\{k<\un^+\}} |E(x)|^{p'}+ \int_{\{k<\un^+\}}|f(x)|\right)
\]
By hypothesis, $E+f\in\L{m}$ with $m>\frac{N}{2}$, using Holder's inequality and lemma \eqref{lemA} we deduce that 
\[
\norma{S(\un^+)}{\infty}\le C
\]
for some $C>0$ independent of $n$. Now, condition \eqref{Scon} implies:
\[
\norma{\un^+}{\infty}\le C'
\]
again, for some $C'>0$ independent of $n$. By repeating the same argument with $\un^{-}$ instead of $\un^{+}$ we easily reach the conclusion that 
\[
\norma{\un^-}{\infty}\le C.
\]
Therefore, we have $\norma{\un}{\infty}\le C$ for some $C>0$ independent of $n$. 

Now, taking $\vf=\un$ in \eqref{twk} we easily obtain
\[
\norma{D\un}{p}^{p}\le C.
\]
We conclude that up to a subsequence, $\un\rightharpoonup u$ in $\wpp$ and $\un\to u$ strongly in $\L{q}$ for $q<p^{*}$ and a.e. in $\Om$.

It follows that we can pass the limit in the last integral of \eqref{twk} but it's not obvious that we can pass the limit in the first and second integral. For the second integral notice that given any measurable set $A\subset \Om$:
\[
\int_A
|h_n(\un)E_n(x)Dv|
\leq
\bigg(\int_{A} |h(\un)E|^{p'}\bigg)^\frac{1}{p'}
\bigg(\int_{A}|D v|^2\bigg)^\frac1p\le C \bigg(\int_{A}|D v|^2\bigg)^\frac1p.
\]
where we used the fact that $h(\un)E\in\L{p'}$. It follows by Vitali's convergence theorem that we can pass the limit in the second integral as well. 

To deal with the first integral we plan to use lemma \eqref{lemB}. Notice that
\[
\int_{\Om} [a(x,\un,D\un)-a(x,u,Du)]\cdot (D\un-Du) = \int_{\Om} h(u_{n})E(x)D(\un-u) + \int f(u_{n}-u)-\int_{\Om} a(x,u,Du)D(\un-u)
\]
which goes to $0$ as $n\to\infty$. The result then follows.
\end{proof}
\begin{corollary}(Strong monotonicity)\label{strongcor}
Assuming \eqref{llcon},\eqref{hcon}, $E\in [\L r]^{N}$ with $r>N$, $f\in\L m$ with $m>(p^{*})'$. Suppose  
\begin{equation}\label{strongM}
[a(x,s,\xi)-a(x,s',\xi')](\xi-\xi')\geq \al |\xi-\xi'|^{p}
\end{equation}
 for some $\al>0$. Then uniqueness holds. Moreover, any weak solution of \eqref{main} $u\in\wpp$ that is not bounded but 
 \begin{equation}\label{unb}
|h'(s)|\leq  C(|s|^{\theta}+1)\\\ \mbox{a.e. and }\\\ |v|^{\theta}|E| \in \L{p'}
\end{equation}
for some $\theta>0$, is also unique.
\end{corollary}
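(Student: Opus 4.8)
The aim is to show that two weak solutions $u_1,u_2\in\wpp$ of \eqref{main} coincide, first when both are bounded and then when they are unbounded but obey \eqref{unb}. Subtracting the two identities \eqref{wk} (with $\mu=0$), the source $f$ disappears and, with $w:=u_1-u_2$,
\[
\int_{\Omega}\big[a(x,u_1,Du_1)-a(x,u_2,Du_2)\big]D\varphi=\int_{\Omega}\big[h(u_1)-h(u_2)\big]E(x)\,D\varphi\qquad\forall\,\varphi\in\wpp .
\]
The plan is to use the admissible test functions $\varphi=G_k(w^{+})$ and $\varphi=-G_k(w^{-})$ for $k\ge0$; the strong monotonicity \eqref{strongM} bounds the left integrand below by $\alpha|Dw|^{p}$, giving
\[
\alpha\int_{\{|w|>k\}}|Dw|^{p}\le\int_{\{|w|>k\}}\big|h(u_1)-h(u_2)\big|\,|E|\,|Dw| .
\]

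To estimate the convection term: when $u_1,u_2$ are bounded, $h$ is Lipschitz on a compact interval containing their ranges, so $|h(u_1)-h(u_2)|\le L\,|w|$; when the solutions are unbounded but obey \eqref{unb} I would write $h(u_1)-h(u_2)=\big(\int_0^1 h'(u_2+tw)\,dt\big)\,w$ and use $|h'(s)|\le C(|s|^{\theta}+1)$ to get $|h(u_1)-h(u_2)|\,|E|\le C\,\Theta(x)\,|w|$ with $\Theta:=(|u_1|^{\theta}+|u_2|^{\theta}+1)|E|\in L^{p'}(\Omega)$, which is exactly what \eqref{unb} provides. Then, writing $|w|=(w-k)^{+}+k$ on $\{w>k\}$ and estimating by Hölder with the exponent triple $\big(\tfrac{p'r}{r-p'},\,r,\,p\big)$ and the Sobolev inequality for $G_k(w^{+})\in\wpp$, one uses that $r>N$ is precisely what makes $\tfrac{p'r}{r-p'}\le p^{*}$ and also forces $\|E\|_{L^{r}(\{|w|>k\})}\to0$ as $k\to\infty$. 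In the linear case $a(x,s,\xi)=M(x)\xi$ condition \eqref{strongM} is automatic from ellipticity, so this also covers the model problem.

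The main obstacle is the final step: converting the resulting estimate into $w\equiv0$ \emph{without} a smallness hypothesis on $\|E\|_{r}$. A crude energy estimate — testing with $w$ on all of $\Omega$ — only bounds $\|Dw\|_{p}$ a priori (and, when $p=2$, only relates $\alpha$, the Sobolev constant, and $\|E\|_{r}$), so the argument has to genuinely use $r>N$: after absorbing the $(w-k)^{+}$-part for $k$ large, one obtains a nonlinear inequality for the distribution function $k\mapsto|\{|w|>k\}|$ whose exponent is super-critical exactly when $r>N$, and a Stampacchia-type iteration should close it. A secondary, purely technical, point appears when $p\ne2$: coercivity produces the power $p$ while Young's inequality produces the power $2$ (or $p'$), and it is to reconcile these that one works on the truncations $G_k(w)$ rather than on $w$ itself. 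Making this interplay precise is the delicate part of the argument; the remaining steps adapt the linear computations of \cite{boc24} line by line.
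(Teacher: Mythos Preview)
Your route via $G_k(w)$ and a Stampacchia-type iteration is genuinely different from the paper's, and the obstacle you yourself flag---the mismatch between the $p$-th power coming from coercivity and the lower power produced by H\"older/Young on the convection term when $p>2$---is real and is not resolved in your sketch. There is also a secondary issue in the unbounded case: hypothesis \eqref{unb} places $\Theta:=(|u_1|^{\theta}+|u_2|^{\theta}+1)|E|$ only in $L^{p'}$, not in $L^{r}$, so your exponent triple $\big(\tfrac{p'r}{r-p'},r,p\big)$ is not available there and Sobolev cannot absorb the factor $|G_k(w)|$ as you propose.

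The paper sidesteps both difficulties by testing with the \emph{small} truncation $\varphi=T_{\epsilon}(u_1-u_2)$ rather than $G_k$. Since $DT_{\epsilon}(w)$ is supported on $\{|w|<\epsilon\}$, the Lipschitz bound (respectively \eqref{unb}) yields $|h(u_1)-h(u_2)|\le C\epsilon$ (respectively $\le C(|u_1|^{\theta}+|u_2|^{\theta}+1)\,\epsilon$) on that set, so after strong monotonicity, Young and Poincar\'e one arrives at
\[
\int_{\Omega}|T_{\epsilon}(w)|^{p}\;\le\; C\,\epsilon^{p'}\!\int_{\{|w|<\epsilon\}}\Theta^{p'},
\]
with the last integral finite. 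Bounding the left side below by $\epsilon^{p}|\{|w|>k\}|$ for any fixed $k\ge\epsilon$, dividing by $\epsilon^{p}$ and sending $\epsilon\to0$ is the paper's endgame: no iteration is invoked, and only $E\in L^{p'}$ (not the stronger $r>N$) is used. The philosophical difference is that the paper extracts smallness from the \emph{nonlinearity} (the factor $|h(u_1)-h(u_2)|$ is $O(\epsilon)$ on $\{|w|<\epsilon\}$), whereas your approach tries to extract it from the \emph{domain of integration} shrinking as $k\to\infty$, which is precisely where the exponents fight you.
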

\begin{proof}
Suppose $u,v \in\wpp\cap\L{\infty}$ are two solutions. We have $\forall \varphi \in  \wpp$:
\[
\begin{split}
&\int_{\Omega} a(x,u,Du) D \varphi =\int_{\Omega}h(u)E(x) D \varphi  + \int_{\Omega} f(x) \vf \\
&\int_{\Omega} a(x,v,Dv) D \varphi =\int_{\Omega}h(v)E(x) D \varphi  + \int_{\Omega} f(x) \vf
\end{split}
\]
Take $\vf=T_{\e}(u-v)$ and subtract both equations to obtain:
\[
\int_{\Om} (a(x,u,Du)-a(x,v,Dv)) DT_{\e}(u-v)=\int_{\{|u-v|<\e\}} [h(u)-h(v)]E(x) DT_{\e}(u-v)
\]
Using Young's inequality we have:
\[
\al \int_{\Om} |DT_{\e}(u-v)|^{p}\le \frac{2}{\al p p'}\int_{\{|u-v|<\e\}} |[h(u)-h(v)]E(x)|^{p'}  + \frac{\al}{2}\int_{\Om} |DT_{\e}(u-v)|^{p}
\]
Poincare inequality gives:
 \begin{equation}\label{uvest}
C \int_{\Om} |T_{\e}(u-v)|^{p}\le \frac{2}{\al p p'}\int_{\{|u-v|<\e\}} |[h(u)-h(v)]E(x)|^{p'},
\end{equation}
and since $u,v\in \L{\infty}$, we can use the fact that $|h(u)-h(v)|\leq C |u-v|$, hence for any $k\geq \e$:
\[
 \e^{p}|\{ |u-v|>k\}| \le C \int_{\{|u-v|<\e\}} \e^{p'}|E(x)|^{p'} \le C \int_{\{|u-v|<\e\}} \e^{p}|E(x)|^{p'}
\]
Dividing by $\e^{p}$ and letting $\e\to 0$ we conclude that $|\{ |u-v|>k\}|=0$ for any $k>0$, hence $u=v$ a.e.

Now suppose, $u,v$ unbounded, in this case we can't use the Lipschitz condition directly, but \eqref{unb} give us
\[
\frac{|h(v)-h(w)|}{|v-w|}\le C\int_0^1\big(|w+t(v-w)|^{\theta}+1\big)dt\le  C(|v|^{\theta}+|w|^{\theta}+1 ),
\]
If we use this in \eqref{uvest}, we reach the same conclusion as before, namely, $u=v$ a.e.
\end{proof}
\vspace{0.1in}
\begin{remark}
In particular, if $a(x,s,\xi)=M(x)\xi$ or $a(x,s,\xi)=|\xi|^{p-2}\xi$, then $a(x,\xi)$ satisfies the strong monotonicity condition \eqref{strongM}.
\end{remark}
\vspace{0.1in}
\begin{remark}
If $p<2$, $a(x,s,\xi)$ is not necessarily strong monotone, in fact for $1<p<2$ we have:
\[
(|Du|^{p-2}Du-|Dv|^{p-2}Dv)\cdot (Du-Dv)\ge C \frac{|Du-Dv|^{2}}{(|Du|+|Dv|)^{2-p}}.
\]
\end{remark}
\vspace{0.1in}
\subsection{Lower summability of f(x) and unbounded solutions}
\vspace{0.1in}
\begin{theorem} \label{uns}
Assuming \eqref{llcon}, \eqref{strongM} , take
$E\in [\L r]^N$,  with $ r>N$, and 
$f\in  \L{(p^{*})'}$. Then, there exists a unique (possibly unbounded) weak solution $u\in\wpp$ to
\begin{equation} \label{logp}
\begin{cases}
-\Div(a(x,u,Du))= -\Div(u\log (e+|u|)E(x))+f(x) \qquad & \mbox{in } \Omega,\\
u (x) = 0 & \mbox{on }  \partial \Omega.
\end{cases}
\end{equation}
\end{theorem}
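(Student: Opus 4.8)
The plan is to rerun the approximation scheme already used for Theorem \ref{decay}, except that the a priori bound now produces a $\wpp$-estimate rather than an $L^{\infty}$-estimate (the latter being out of reach when $f\in\L{(p^{*})'}$), then pass to the limit exactly as there, and finally deduce uniqueness from Corollary \ref{strongcor}. Concretely, for $n>0$ put $h_n(s)=T_n(h(s))$, $f_n=T_n(f)$ and let $E_n$ be $E$ truncated componentwise by $n$; the Leray--Lions theorem together with Stampacchia's regularity gives $u_n\in\wpp\cap\L{\infty}$ solving the truncated equation \eqref{twk}, and everything reduces to estimating $u_n$ in $\wpp$ uniformly in $n$.

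For the a priori estimate, note first that $h(s)=s\log(e+|s|)$ satisfies \eqref{Scon}: one has $S(t)\to\infty$, with $S(t)\sim\log\log t$ when $p=2$ and $S(t)$ comparable to a positive power of $t$ (up to logarithms) when $p>2$. Hence, testing \eqref{twk} with $R(u_n^{+})$ and with the analogous function built from $u_n^{-}$ and repeating the computation in the proof of Theorem \ref{decay} verbatim — that computation only uses $f\in\L1$ and $E\in\L{p'}$ — yields $\|DS(u_n)\|_{p}\le C$, hence $\|S(u_n)\|_{p^{*}}\le C$, uniformly in $n$. Next, test \eqref{twk} with $u_n$ itself:
\[
\alpha\|Du_n\|_{p}^{p}\le \int_{\Om}|h(u_n)|\,|E|\,|Du_n|+\|f\|_{(p^{*})'}\|u_n\|_{p^{*}};
\]
the source term is absorbed via Sobolev and Young, while for the convection term I apply Hölder with the three exponents $p$, $r$ and $q:=\tfrac{p'r}{r-p'}$ (so that $\tfrac1p+\tfrac1r+\tfrac1q=1$). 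The hypothesis $r>N$ forces $q<p^{*}$, and since $s\log(e+|s|)$ exceeds linear growth only by a logarithm, $\|h(u_n)\|_{q}\le C\bigl(1+\|u_n\|_{p^{*}}^{1+\varepsilon}\bigr)$ for every small $\varepsilon>0$. Using $\|u_n\|_{p^{*}}\le C\|Du_n\|_{p}$ and Young once more, one is left with $\tfrac{\alpha}{4}\|Du_n\|_{p}^{p}\le C+C\|Du_n\|_{p}^{(1+\varepsilon)p'}$, which closes as soon as $(1+\varepsilon)p'<p$, i.e. for $p>2$. The endpoint $p=2$ is where I expect the real difficulty: then $p'=p$, the gap between $q$ and $p^{*}$ is exactly consumed, and plain absorption fails. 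There one must instead split the convection integral over $\{|u_n|\le k\}$ and $\{|u_n|>k\}$, use that on the second set $|h(u_n)|\le\omega(k)^{1/q}|u_n|^{p^{*}/q}$ pointwise with $\omega(k)\to0$, and feed in the uniform bound on $\|DS(u_n)\|_{p}$ together with a Stampacchia-type iteration in the spirit of \cite{boc09} to absorb the supercritical tail. In all cases one reaches $\|u_n\|_{\wpp}\le C$ uniformly.

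Once the uniform bound is in hand, the passage to the limit is identical to the one in Theorem \ref{decay}: up to a subsequence $u_n\rightharpoonup u$ in $\wpp$, $u_n\to u$ in $\L{q}$ for $q<p^{*}$ and a.e.\ in $\Om$; $h(u_n)E\to h(u)E$ in $[\L{p'}]^{n}$ by Vitali (the family is equi-integrable because $h(u_n)E\in\L{p'}$, exactly as before), so one passes to the limit in the convection and source terms; and from
\[
\int_{\Om}[a(x,u_n,Du_n)-a(x,u,Du)]\cdot(Du_n-Du)=\int_{\Om}h(u_n)E\cdot D(u_n-u)+\int_{\Om}f(u_n-u)-\int_{\Om}a(x,u,Du)\cdot D(u_n-u)\longrightarrow0
\]
together with Lemma \ref{lemB} one gets $a(x,u_n,Du_n)\rightharpoonup a(x,u,Du)$ in $\L{p'}$. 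Passing to the limit in \eqref{twk} shows that $u$ is a weak solution of \eqref{logp}.

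Uniqueness then follows from Corollary \ref{strongcor}. Since $h'(s)\sim\log(e+|s|)$, we have $|h'(s)|\le C(|s|^{\theta}+1)$ for every $\theta>0$; choosing $\theta>0$ small enough that $\tfrac{\theta}{p^{*}}+\tfrac1r\le\tfrac1{p'}$ — possible because $r>N>p'$ — and recalling that any weak solution lies in $\wpp\subset\L{p^{*}}$, Hölder gives $|u|^{\theta}|E|\in\L{p'}$, so hypothesis \eqref{unb} is met and the corollary applies. Thus the only genuinely new ingredient beyond Theorem \ref{decay} and Corollary \ref{strongcor} is the a priori $\wpp$-bound, which for $p>2$ is a direct absorption argument but at the borderline $p=2$ requires exploiting both the slow growth of $S$ and an iteration à la \cite{boc09} — this is the step I expect to be the main obstacle.
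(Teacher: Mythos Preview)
Your scheme splits into two regimes. For $p>2$ the direct absorption you describe is correct and actually simpler than the paper's route: the strict gap $p'<p$ lets you swallow $\norma{Du_n}{p}^{(1+\varepsilon)p'}$ into $\norma{Du_n}{p}^{p}$, and neither the preliminary bound on $\norma{DS(u_n)}{p}$ nor any iteration is needed. The paper instead runs a single argument valid for all $p\ge2$, and that argument is precisely what handles the endpoint $p=2$ you flag as the obstacle.

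At $p=2$ your sketch does not close. Splitting over $\{|u_n|>k\}$ and writing $|h(u_n)|\le\omega(k)|u_n|^{p^{*}/q}$ still leaves, after H\"older and Sobolev, a term of size $C\omega(k)\norma{Du_n}{p}^{p^{*}/q+1}$ with exponent $p^{*}/q+1>2=p$; smallness of $\omega(k)$ cannot compensate for a supercritical power, and the bound on $\norma{DS(u_n)}{p}$ only controls $\log\log|u_n|$ when $p=2$, which is too weak for any Stampacchia-type iteration to absorb the tail. The paper avoids this entirely by testing \eqref{lwk} with the bounded function
\[
\varphi=\int_0^{u_n}\frac{\log^{p(j-1)}(e+|s|)}{(e+|s|)^p}\,ds.
\]
Coercivity then produces $\norma{D\log^{j}(e+|u_n|)}{p}^{p}$ on the left, while after H\"older the dangerous term on the right is $\norma{\log^{j}(e+|u_n|)}{p^{*}}^{p}$ multiplied by $\bigl(\int_{\{|u_n|\ge k\}}|E|^{N/(p-1)}\bigr)^{p'}$ --- the \emph{same} power $p$ on both sides. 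Choosing $k$ large enough to make the $E$-integral small yields a uniform bound on $\norma{\log^{j}(e+|u_n|)}{p^{*}}$ for every $j>1$; this is then fed back into the estimate coming from $\varphi=u_n$ via \eqref{estk}, closing the $\wpp$-bound for all $p\ge2$ at once. Your limit passage and the uniqueness via Corollary~\ref{strongcor} are fine as written.
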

\begin{proof} 
We proceed in a similar way we did in the previous theorem. Consider the truncated equation:
\begin{equation}\label{lwk}
\int_{\Omega} a(x,u_{n},Du_{n}) D \varphi =\int_{\Omega}T_{n}(\un\log (e+|\un|))E_{n}(x) D \varphi  + \int_{\Omega} f_{n}(x) \varphi 
\end{equation}
Take $\vf=\un$ as a test function in \eqref{lwk}:
\[
\begin{split}
\al \norma{D\un}{p}^{p}&\le \int_{\Omega}|\un\log (e+|\un|))||E_{n}(x)| |D \un| +\norma{f}{(p*)'}\norma{\un}{p^{*}}\\
&\le C\int_{\Omega}\left( |\un\log (e+|\un|)||E_{n}(x)| \right)^{p'} + \frac \al 2 \norma{D\un}{p}^{p}+\So\norma{f}{(p*)'}\norma{D\un}{p}
\end{split}
\]
Fix $k>0$ and consider the first integral on the right:
\[
\int_{\Omega}\left( |\un\log (e+|\un|)||E_{n}(x)| \right)^{p'}\le \int_{\{|\un|>k\}} |\un|^{p'}|\log (e+|\un|)|^{p'}|E_{n}(x)|^{p'} + k^{p'}\log (e+|k|)^{p'}\norma{E}{p'}
\]
Let $d=\frac{1}{N}-\frac 1 r +\frac{p-2}{p}$, we take a closer look at the following integral:
\begin{equation}\label{estk}
\int_{\{|\un|>k\}} |\un|^{p'}|\log (e+|\un|)|^{p'}|E(x)|^{p'} \le \norma{\un}{p^{*}}^{p'} \norma{\log (e+|\un|)}{d}^{p'}\left(\int_{\{|\un|>k\}} |E|^{r}\right)^{\frac{p'}{r}}
\end{equation}
The only thing remaining is then estimate the middle norm on the right, namely $\norma{\log (e+|\un|)}{d}^{p'}$.

 In order to accomplish that, we need to take a test function such that the coercivity (and possibly Sobolev inequality) of $a(x,s,p)$ will lead to a bound of the norm. More precisely, we will find a bound to the expression $\norma{\log^{j} (e+|\un|)}{p^{*}}$, for any $j>1$. This can be done by choosing $\vf=\int_0^{\un}\frac{\log^{p(j-1)}(e+|s|)}{(e+|s|)^p}ds$ as a test function in \eqref{lwk} (notice that $|\vf|\le C$ in this case):
\[
\begin{split}
\frac{\al}{j^{p}}\int |D\log^{j}(e+|\un|)|^{p}&=\alpha\int |D \un|^p\frac{\log^{p(j-1)}(e+|\un|)}{(e+|\un|)^p} \le \int |E(x)||D \un| \frac{\log^{p(j-1)+1}(e+|\un|)}{(e+|\un|)^{(p-1)}} +C\norma{f}{1}
\\ \le& C\int\frac{\log^{p'[(p-1)(j-1)+1]}(e+|\un|)}{(e+|\un|)^{p'(p-2)}}|E(x)|^{p'}+\frac{\alpha}{2}\int|D \un|^p\frac{\log^{p(j-1)}(e+|\un|)}{(e+|\un|)^p}+C\norma{f}{1},\\ \le& C\int \log^{p'[(p-1)(j-1)+1]}(e+|\un|)|E(x)|^{p'}+\frac{\alpha}{2}\int|D \un|^p\frac{\log^{p(j-1)}(e+|\un|)}{(e+|\un|)^p}+C\norma{f}{1}
\end{split}
\] 
Rearranging, we have:
\[
\begin{split}
\frac{\al}{\So 2j^{p}} \norma{\log^{j}(e+|\un|)}{p*}^{p}\le \frac{\al}{2j^{p}}\int |D\log^{j}(e+|\un|)|^{p}& \le C\left(\int \log^{p'[(p-1)(j-1)+1]}(e+|\un|)|E(x)|^{p'} +\norma{f}{1}\right)
\\ \le& C\left(\int_{\{|\un|>k\}} \log^{p'[(p-1)(j-1)+1]}(e+|\un|)|E(x)|^{p'}\right)\\ &+ C\left(\log^{p'[(p-1)(j-1)+1]}(e+k)\norma{E}{p'}^{p'}+\norma{f}{1}\right)\\
\\ \le& C\left(\int_{\{|\un|>k\}} \log^{jp}(e+|\un|)|E(x)|^{p'}\right)\\ &+ C\left(\log^{p'[(p-1)(j-1)+1]}(e+k)\norma{E}{p'}^{p'}+\norma{f}{1}\right)\\
\end{split}
\] 
Using Holder inequality, this becomes:
\[
\frac{\al}{\So 2j^{p}} \norma{\log^{j}(e+|\un|)}{p*}^{p}\le C (\norma{\log^{j}(e+|\un|)}{p*}^{p}\left(\int_{\{|\un|\ge k\}} |E(x)|^{\frac{N}{p-1}}\right)^{p'}+\norma{E}{p'}^{p'}+\norma{f}{1})
\]
By Chebyshev's inequality and uniform continuity of the integral, we can make the last integral as small as we want, so we conclude that
\[
\norma{\log^{j}(e+|\un|)}{p*}^{p}\le C\left(\norma{E}{p'}^{p'}+\norma{f}{1}\right )
\]
Putting this back into \eqref{estk} and combining everything we get:
\[
\al \norma{D\un}{p}^{p}\le C\left(\norma{E}{p'}^{p'}+\norma{f}{1}\right )
\]
Hence, $\un\wc u$ in $\wpp$. Using Corollary \ref{strongcor} and proceeding in the same way we did in the proof of theorem \ref{decay} the proof is complete.
\end{proof}
\subsection{Lower summability without \eqref{Scon}}
\vspace{0.1in}
\begin{theorem} \label{clow}
Suppose \eqref{llcon},\eqref{strongM} and consider $\theta>0, m\in [(p^{*})',\frac{N}{p}]$ such that
\begin{equation}\label{tcon}
0<\frac{\theta}{s}=\frac{p-1}{N}-\frac{1}{r},\; f\in\L{m},\; E \in [\L{r}]^{n},
\end{equation}
where $s=\frac{Nm}{N-mp}=m^{\overbrace{**\ldots*}^{p\text{-times}}}$. If 
\begin{equation}\label{minc}
\norma{f}{m}\norma{E}{r}^{\frac{1}{\theta}}\le \frac{\theta}{\So}\left(\frac{\alpha p^{*}}{\So s(\theta +1)}\right)^{1+\frac{1}{\theta}},
\end{equation}
then the Dirichlet problem 
\begin{equation}\label{lows}
\begin{cases}
 -\Div(a(x,u,Du))= -\Div(u|u|^{\theta}E(x))+f(x) \qquad & \mbox{in } \Omega,\\
u (x) = 0 & \mbox{on }  \partial \Omega,
\end{cases}
\end{equation}
has a unique solution $u\in\wpp\cap\L{s}$.
\end{theorem}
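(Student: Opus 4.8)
The plan is to follow the now-familiar truncation-and-a-priori-estimate scheme, but this time \eqref{Scon} fails, so the boundedness obtained in Theorem \ref{decay} is no longer available; instead I would track the $L^s$-norm of the approximating solutions. First, for fixed $n$, invoke the Leray--Lions theorem to produce $\un\in\wpp\cap\L\infty$ solving the truncated problem \eqref{lwk} with $h(u)=u|u|^\theta$ replaced by $T_n(u_n|u_n|^\theta)$, $f$ by $f_n=T_n(f)$, and $E$ by $E_n$. The heart of the argument is to test this equation with a power-type function, namely $\vf=|\un|^{(s/p^*-1)p}\un$ (equivalently $\vf=\un|\un|^{\gamma-1}$ for the exponent $\gamma$ that makes the Sobolev term come out as $\norma{\un}{s}$). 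Using coercivity on the left, Young's inequality on the convection term, and Hölder with the triple of exponents encoded in \eqref{tcon} ($s$ on $|\un|$, $r$ on $|E|$, and the conjugate on the remaining factor), the gradient term from $\vf$ combines — via $D(|\un|^{\beta}\un)$ and Sobolev — into a constant times $\norma{\un}{s}^{p}$ on the left-hand side, while the right-hand side produces a term $\frac{\So}{\alpha}\cdot\frac{s(\theta+1)}{p^*}\norma{E}{r}\norma{\un}{s}^{\theta}\norma{\un}{s}^{p}$ plus a lower-order term controlled by $\norma{f}{m}\norma{\un}{s}^{p-?}$. The arithmetic is designed so that the coefficient of $\norma{\un}{s}^{p+\theta\cdot(\text{something})}$ is exactly what \eqref{minc} bounds.

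The key point — and the reason the smallness condition \eqref{minc} appears — is that the resulting inequality has the shape $X\le A X^{1+1/\theta\cdot c}+B X^{d}$ with $X=\norma{\un}{s}$ (or a power of it), where $A$ is a constant multiple of $\norma{f}{m}\norma{E}{r}^{1/\theta}$ and the exponents are dictated by the scaling relation $\theta/s=(p-1)/N-1/r$. When \eqref{minc} holds, the function $t\mapsto At^{1+1/\theta\cdot c}+Bt^d - t$ is non-positive on an interval bounded away from $0$ and $\infty$, so any solution of the inequality lying in that regime is forced to satisfy a uniform bound $\norma{\un}{s}\le C$ independent of $n$; one checks that $\un$ does land in the right component by a continuity/connectedness argument in $n$ (starting from $\un\equiv 0$-type estimates for small data, or using that $\norma{\un}{s}\to$ the admissible branch as the truncation is removed). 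This is essentially the standard fixed-point-interval trick used for non-coercive problems with small data, adapted to the $L^s$ scale rather than $L^\infty$.

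Once $\norma{\un}{s}\le C$ is in hand, testing \eqref{lwk} with $\un$ itself and using Hölder exactly as in the proofs above gives $\norma{D\un}{p}^p\le C$, so up to a subsequence $\un\wc u$ in $\wpp$, $\un\to u$ a.e. and strongly in every $\L q$ with $q<p^*$; the $L^s$ bound upgrades this to $u\in\L s$ by lower semicontinuity of the norm. To pass to the limit in the convection term one shows $T_n(\un|\un|^\theta)E_n\to u|u|^\theta E$ in $\L{p'}$ (equi-integrability via the uniform $L^s$ bound on $\un|\un|^\theta$ — here the precise exponent in \eqref{tcon} guarantees $u|u|^\theta|E|\in\L{p'}$ — plus Vitali, exactly as in Theorem \ref{decay}), and to handle the principal part one invokes Lemma \ref{lemB} after verifying $\int_\Om[a(x,\un,D\un)-a(x,u,Du)]\cdot(D\un-Du)\to 0$, which follows by writing that integral through the equation as a sum of terms each of which vanishes in the limit. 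Uniqueness is then immediate from Corollary \ref{strongcor}: the growth hypothesis $|h'(s)|\le C(|s|^\theta+1)$ holds with the given $\theta$ and $|v|^\theta|E|\in\L{p'}$ is precisely what \eqref{tcon} secures, so the strong-monotonicity uniqueness argument applies verbatim.

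The main obstacle I expect is the bookkeeping in the test-function estimate: choosing the exponent in $\vf$ so that, after Young and Hölder, the powers of $\norma{\un}{s}$ on both sides match the relation $\theta/s=(p-1)/N-1/r$ and the numerical constant lands on the exact threshold \eqref{minc}. This is a calculation rather than a conceptual difficulty — the scaling is forced, so the exponents must work — but getting the constant $\frac{\theta}{\So}\big(\frac{\alpha p^*}{\So s(\theta+1)}\big)^{1+1/\theta}$ on the nose requires care with the Young's-inequality split and with which Sobolev constant $\So$ is absorbed where. A secondary subtlety is justifying that $\un$ stays on the good branch of the scalar inequality uniformly in $n$, which must be argued rather than assumed.
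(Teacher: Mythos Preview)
Your approach is correct in spirit but takes a genuinely different route from the paper.  You propose the same approximation-by-truncation scheme used in Theorems \ref{decay} and \ref{uns}: solve truncated problems, test with the power function $\vf=|u_n|^{p(\gamma-1)}u_n$ (with $\gamma=s/p^*$), and extract an a priori $L^s$ bound.  The paper instead runs a Schauder fixed-point argument on $\L{s}$: for $v\in\L{s}$ it lets $T(v)=u$ be the solution of the \emph{linearized} problem $-\Div(a(x,u,Du))=-\Div(v|v|^{\theta}E)+f$, proves $T$ is continuous and compact on $\L{s}$, and shows $T$ leaves a ball invariant.

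Both routes use exactly the same test function and land on the same scalar estimate with the same constants.  The difference is where the nonlinearity sits.  In the paper's setup the estimate reads $\norma{T(v)}{s}\le a+b\norma{v}{s}^{1+\theta}$ with $a=\frac{\So^{2}s}{\alpha p^{*}}\norma{f}{m}$ and $b=\frac{\So s}{\alpha p^{*}}\norma{E}{r}$, so \eqref{minc} is precisely the condition $a+bR^{1+\theta}\le R$ at $R=(b(\theta+1))^{-1/\theta}$, and the invariant ball drops out with no further argument.  In your setup the same computation gives $\norma{\un}{s}\le a+b\norma{\un}{s}^{1+\theta}$, an inequality with two solution branches, and you then need the continuity/connectedness step you flag at the end.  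That step is not free here: it requires continuous dependence of $u_n$ on a homotopy parameter \emph{in the $\L{s}$ norm}, and since $s\ge p^{*}$ this does not follow from $\wpp$-continuity alone; you would have to go through the $L^\infty$ bound for the truncated problems or re-run the power-test-function estimate on differences.  The Schauder decoupling buys you exactly the avoidance of this branch-selection issue, at the cost of having to verify continuity and compactness of $T$ (which the paper does via the same estimate \eqref{up2} applied to $G_l(u)$).  Your approach keeps the paper's proofs methodologically uniform but carries the extra burden you correctly identified as the ``secondary subtlety''.
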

\begin{proof} 
The proof is topological, using Schauder fixed-point theorem. Given any $v\in\L{s}$, let $u\in\wpp\cap\L{s}$ be the unique weak solution to 
\begin{equation}
 -\Div(a(x,u,Du))= -\Div(v|v|^{\theta}E(x))+f(x). 
\end{equation}
The correspondence $v\to T(v):=u$ defines an operator $T:\L{s}\to \L{s}$, we claim $T(v)$ has a fixed point. By Schauder fixed-point theorem it's enough to prove that $T$ is continuous, compact and has a convex invariant subspace.

\vspace{.1in}
\underline{\textbf{Claim:} $T$ is continuous.}
\vspace{.1in}

We need some estimates first in order to prove continuity. Set $\g=\frac{s}{p^{*}}$ and let $ u_{kl}=T_{k}(G_{l}(u))$. Take $\vf=\frac{1}{p(\g-1)+1}|u_{kl}|^{p(\g-1)}u_{kl}$ as a test function in \eqref{lows}. Using the given hypotheses we have:
\[
\alpha\int |D u_{kl}|^{p}|u_{kl}|^{p(\g-1)}\le \int |E(x)||v|^{1+\theta}|D u_{kl}||u_{kl}|^{p(\g-1)}+\frac{1}{p(\g-1)+1}\int |f(x)||u_{kl}|^{p(\g-1)+1}.
\]
Using \eqref{tcon} and Holder inequality on the first integral on the right we deduce:
\[
\begin{split}
\int |E(x)||v|^{1+\theta}|D u_{kl}||u_{kl}|^{p(\g-1)} &  \\ 
\le C_{E}&\norma{v}{s}^{1+\theta}\norma{|D u_{kl}||u_{kl}|^{\g-1}}{p}\norma{u_{kl}}{s}^{(p-1)(\g-1)}.
\end{split}
\]
where $C_{E}=\left(\int_{\{|u|>l\}} |E|^{r}\right)^{\frac{1}{r}}$. Using the fact that $m\in [(p^{*})',\frac{N}{p}]$ and Sobolev's inequality, the second integral becomes:
\[
\begin{split}
\int |f(x)||u_{kl}|^{p(\g-1)+1}&\le C_{f} \norma{u_{kl}}{s}^{(p-1)(\g-1)} \left(\int |u_{kl}|^{p^{*}\gamma}\right)^{\frac{1}{p^{*}}}\\
&\le \So\g C_{f} \norma{u_{kl}}{s}^{(p-1)(\g-1)}\left( \int |D u_{kl}|^{p}|u_{kl}|^{p(\g-1)} \right)^{\frac{1}{p}}\\
&=\So\g C_{f} \norma{u_{kl}}{s}^{(p-1)(\g-1)}\norma{|D u_{kl}||u_{kl}|^{\g-1}}{p}
\end{split}
\]
where $C_{f}=\left(\int_{\{|u|>l\}} |f|^{m}\right)^{\frac{1}{m}}$. Combining everything together we have:
\[
\alpha\norma{D|u_{kl}|^{\g}}{p}^{p}\le \norma{u_{kl}}{s}^{(p-1)(\g-1)}\norma{|D u_{kl}||u_{kl}|^{\g-1}}{p}\left(\frac{\So\g}{p(\g-1)+1} C_{f} + C_{E}\norma{v}{s}^{1+\theta}\right)
\]
Using Sobolev inequality again:
\[
\frac{\alpha}{\So\g} \norma{u_{kl}}{s}^{(p-1)s}\le \norma{u_{kl}}{s}^{(p-1)(\g-1)} \left(\frac{\So\g}{p(\g-1)+1} C_{f} + C_{E}\norma{v}{s}^{1+\theta}\right)
\]
Finally, we obtain:
\begin{equation}\label{up2}
\norma{u_{kl}}{s}\le \frac{\So\g}{\alpha} \left(\frac{\So\g}{p(\g-1)+1} C_{f} + C_{E}\norma{v}{s}^{1+\theta}\right),
\end{equation}
using the fact that $p(\g-1)+1> \g$ since $p>1$, this becomes 
\begin{equation}\label{me}
\norma{u}{s}\le \frac{\So^{2}s}{\al p^{*}}\norma{f}{m} +\frac{\So s}{p^{*}\alpha} \norma{E}{r}\norma{v}{s}^{1+\theta},
\end{equation}
if we let $k\to\infty$ and $l\to 0$.

Suppose $v_{n}\to v$ in $\L{s}$, we claim $\un:= T(v_{n})\to u:= T(v)$ in $\L{s}$. Indeed, by \eqref{me} we have:
\begin{equation}
 \norma{u_{n}}{s}\le C
\end{equation}
Using $\vf=\un-u$ as a test function it's easy to see that $\un\to u$ a.e. By the dominated convergence theorem, $\un\to u$ in $\L{s}$, and $T(v)$ is continuous. 

\vspace{.1in}
\underline{\textbf{Claim:} $T$ is compact.}
\vspace{.1in}

Suppose $v_{n}\wc v$ in $\L{s}$, the claim is equivalent to say that $\un=T(v_{n})\to u=T(v)$ in $\L{s}$, up to a subsequence

Choose $\vf=\un$ as a test function in the weak formulation of the equation satisfied by $\un$ to obtain:
\[
\begin{split}
\alpha\norma{D\un}{p}^{p}\le& \int |v_n|^{1+\theta}|E||Du_n|+\int |f||\un|\\
\le & C (\norma{v_n}{s}^{\theta+1}\norma{E}{r}\norma{Du_{n}}{p}+\norma{f}{(p^{*})'}\norma{Du_{n}}{p})
\end{split}
\]
So $\un$ is bounded in $\wpp$, hence up to a subsequence $\un\to \xi$ in $\L{q}$ for $q<p^{*}$. But since $v_{n}\wc v$, we must have $\xi=u=T(v)$, and up to a subsequence $\un\to u$ a.e.

Now, take $\Sigma \subseteq \Om$ a measurable set, we claim $\int_{\Sigma} |\un|^{s}$ is equi-integrable. Indeed, 
\[
\int_{\Sigma}|\un|^{s}\le \int_{\Sigma}|T_l(\un)|^{s}+\int_{\Sigma} |G_l(\un)|^{s}\le l^{s}|\Sigma|+\int_{\Sigma} |G_l(\un)|^{s},
\]
Notice that by \eqref{up2}, $\int_{\Sigma} |G_l(\un)|^{s}$ goes to 0 uniformly as $l\to\infty$, hence the right hand side goes to 0 when $|\Sigma|\to 0$. The result then follows by Vitali Convergence theorem.

\vspace{.1in}
\underline{\textbf{Claim:} $T$ has a closed convex invariant subspace}.
\vspace{.1in}

Set $a=\frac{\So^{2}s}{\al p^{*}}\norma{f}{m}$, $b=\frac{\So s}{p^{*}\alpha} \norma{E}{r}$, and define $$R=\left(\frac{1}{b(\theta+1)}\right)^{\frac{1}{\theta}}.$$
Notice that if $a\le R \frac{\theta}{\theta +1}$ and $\norma{v}{s}\le R$ then $\norma{u}{s}\le a +b\norma{v}{s}^{1+\theta}\le R$, and it follows that the closed ball $B_{R}(0)$ of radius $R$ is invariant. In other words, if $$\norma{f}{m}\norma{E}{r}^{\frac{1}{\theta}}\le \frac{\theta}{\So}\left(\frac{\alpha p^{*}}{\So s(\theta +1)}\right)^{1+\frac{1}{\theta}},$$
then $T$ has a closed convex invariant subspace.

This concludes the proof of existence, uniqueness follow from the strong monotonicity.
\end{proof}

\subsection{No smallness condition if $\mu u$ is added}
\vspace{.1in}
\begin{theorem}
Suppose \eqref{llcon},\eqref{strongM} and consider $\theta<\frac{p-1}{N}, E\in [\L{r}]^{N}$ with $r=\frac{N}{p-1-\theta N}$, and $f\in \L{(p^{*})'}$. If $h(s)=s^{p-1}|s|^{\theta}$, then  \eqref{main2} has a unique solution $u\in\wpp$.
\end{theorem}
\begin{proof} 
As before, we can truncate the equation to the form:
\begin{equation}\label{wk2}
-\Div(a(x,\un,D\un)) + \mu \un= -\Div(T_{n}(\un^{p-1}|\un|^{\theta}) E_{n}(x))+f_{n}(x) 
\end{equation}
Notice that the left hand side is still coercive, since $\mu>0$, hence there is a unique solution $u_{n}\in\wpp\cap\L{\infty}$ for each $n$. Take $\vf=T_{k}(\un)$ as a test function in \eqref{wk2}, we have:
\[
 \alpha\int_{\Om} |D T_{k}(\un)|^p+\mu\int_{\Om} u_n T_{k}(u_n)\le \int_{\Om}k^{\theta+p-1} |E(x)||D T_{k}(u_n) |+ k\norma{f}{1}.
\]
We estimate the first integral on the right using Young inequality:
\[
\int_{\Om} k^{\theta+p-1} |E(x)||D T_{k}(u_n) |+ k\norma{f}{1}\le  \left( \frac{2p k^{(\theta+p-1)p'} }{\alpha p'}\norma{E}{p'} + \frac{\alpha}{2}\norma{DT_{k}(u_n)}{p}\right)+ k\norma{f}{1},
\]
so we get
\begin{equation}\label{tke}
\mu\int_{\Om} u_n T_{k}(u_n) \le \frac{\alpha}{2}\int_{\Om} |D T_{k}(\un)|^p+\mu\int_{\Om} u_n T_{k}(u_n)\le \left( \frac{k^{\theta+p-1}}{p'}\norma{E}{p'}\right)+ k\norma{f}{1},
\end{equation}
dividing everything by $k$ and letting $k\to 0$ we get
\[
\mu\norma{u_n}{1} \le \norma{f}{1},
\]
and by Chebyshev's inequality, for any $t>0$:
\begin{equation}\label{crux}
|\{x\in\Om\;|\; |\un| \ge t\}|\leq \frac{1}{t}\int_{\Om} |u_n|  \le \frac{\mu}{t}\norma{f}{1},
\end{equation}
Now, take $\vf=G_{k}(\un)$ as a test function in \eqref{wk2} to obtain:
\[
\begin{split}
 \alpha\int_{\Om} |D G_{k}(\un)|^p \le& 2^{\theta+p-1}\int_{\Om} |G_k(u_n)|^{\theta+p-1} |E(x)||D G_k(u_n)|\\
+& (2k)^{\theta+p-1}\int_{\Om}|E(x)||D G_k(u_n)|+\int_{\Om} |f||G_{{k}}(u_n)|,
\end{split}
\]
Using Holder's inequality, we can estimate the first integral to the right:
\[
\begin{split}
2^{\theta+p-1} \int_{|u_n|\ge k} |G_k(u_n)|^{p-1}& |G_k(u_n)|^{\theta}|E(x)||D G_k(u_n)|\le \\&   \le2^{\theta+p-1}\norma{G_k(u_n)}{p^*}^{p-1}\norma{G_k(u_n)}{1}^{\theta}\left(\int_{|u_n|\ge k} |E(x)|^r\right)^{\frac1r}\norma{DG_k(u_n)}{p}\\ & \le 2^{\theta+p-1}\So \mu^{-1}\norma{f}{1}\left(\int_{|u_n|\ge k} |E(x)|^r\right)^{\frac1r}\norma{DG_k(u_n)}{p}^{p}
\end{split}
\]
By \eqref{crux}, we can choose $k=k_{0}$ such that:
$$2^{\theta+p-1}\So \mu^{-1}\norma{f}{1}\left(\int_{|u_n|\ge k_{0}} |E(x)|^r\right)^{\frac1r}\le \frac{\al}{4}$$
which implies:
$$2^{\theta+p-1} \int_{|u_n|\ge k_{0}} |G_k(u_n)|^{p-1} |G_k(u_n)|^{\theta}|E(x)||D G_k(u_n)|\le \frac{\al}{4}\norma{DG_k(u_n)}{p}^{p}.$$
The remaining terms on the right can be dealt with by applying Sobolev's and Young's inequalities:
\[
(2k)^{\theta+p-1}\int_{\Om}|E(x)||D G_k(u_n)|+\int_{\Om} |f||G_{{k}}(u_n)|\le C(\norma{E}{p'}+\norma{f}{(p^{*})'})+\frac{\al}{4}\norma{DG_k(u_n)}{p}^{p}
\]
Combining everything together, we conclude that 
\[
\frac{\al}{2}\norma{DG_k(u_n)}{p}^{p} \le C(\norma{E}{p'}+\norma{f}{(p^{*})'}).
\]
Moreover, by \eqref{tke}:
\[
\frac{\al}{2}\norma{DT_k(u_n)}{p}^{p} \le C(\norma{E}{p'}+\norma{f}{1}).
\]
Since $\un=T_{k}(\un) + G_{k}(\un)$, it follows that $\norma{D\un}{p}\leq C$ for some $C$ independent of $n$, and $\un\wc u$ in $\wpp$ for some $u\in\wpp$. By using the same arguments as we did in the end of the proof of theorem \ref{decay} we can conclude that $u$ is a solution.

For the uniqueness, notice that if $r=\frac{N}{p-1-\theta N}$ then
\[
\int_{\Omega} |\un|^{(\theta+p-1)p'}|E|^{p'}\le\norma{\un}{p^{*}}^{(p-1)p'}\norma{\un}{1}^{\theta p'}\norma{E}{r}^{p'},
\]
by Corollary \ref{strongcor}, the proof is complete.
\end{proof}
\section{Concluding remarks and open questions}\label{last}
\vspace{0.1in}
The natural generalization of the problem treated in this manuscript  would be a fully non-linear equation with superlinear convection, i.e. $F(x,u,Du,D^{2}u)=h(u)E+f$. Although the right hand side of \eqref{main} can be analyzed in a similar way as we did here, new ideas would be needed to obtain estimates, namely, methods involving viscosity solutions. 
\vspace{0.1in}
\begin{enumerate}
	\item \textbf{The case p=1} In these notes we have assumed $p\geq 2$ because some of the estimates fail when $p<2$. A natural question would be if and under what circumstances we have existence, and what type of regularity should we expect. The particular case of $p=1$ is very rich and interesting on its own since it is a generalization of an equation of mean curvature type, unexpected outcomes could occur in this case and results with a geometric flavor could be obtained.
	\vspace{0.1in}
\item \textbf{Fully nonlinear generalizations} As mentioned above, the following problem seems to be interesting but can't be handle with methods shown here. Significantly new ideas would be needed, even if $E=Dg$ for some $g$. Here, $F$ degenerate elliptic with some type p-growth/p-coercivity.
\[
F(x,u,Du,D^{2}u)=h(u)E+f.
\]
Classical results usually assume $F$ convex and uniformly elliptic. It would be interesting to see if those results can be generalized for a more singular $F$.
\item \textbf{Radially symmetric solutions} Is \eqref{minc} sharp when $p>2$? In particular, if $a(x,s,\xi)=|\xi|^{p-2}\xi$ and $u(x)=\hat{u}(|x|)$, is it possible to obtain $E$ and $f$ such that \eqref{minc} depends only on $|x|$? Such result would be a generalization of \cite[Prop.~1.4]{boc24}.
\end{enumerate}

\subsection*{Acknowledgements}

\vspace{.1in}
\noindent\textbf{Data Availibility Statement} This manuscript has no associated data.

\vspace{.1in}
\noindent\textbf{Funding} No funding was received for conducting this study.

\vspace{.1in}
\noindent\textbf{Conflict of interest} We certify that the submission is original work and is not under review at any other publication.

\vspace{.1in}
\noindent\textbf{Ethics approval} No ethical approval is required.
 
\bibliography{sn-article}
\end{document}